\newtheorem{theorem}{Theorem}[section]
\newtheorem{property}[theorem]{Property}
\newtheorem{observation}[theorem]{Observation}
\newtheorem{lemma}[theorem]{Lemma}
\newtheorem{definition}[theorem]{Definition}
\newcommand{\vertex}{\node[vertex]}
\tikzstyle{vertex}=[circle, draw, inner sep=0pt, minimum size=6pt]
\newcommand{\cartprod}{\hskip1pt \Box \hskip1pt}
\begin{document}
\title{Edgeless graphs are the only universal fixers}
\author{Kirsti Wash\\ Clemson University\\ \texttt{kirstiw@\allowbreak clemson.edu}}
\maketitle
\abstract
Given two disjoint copies of a graph $G$, denoted $G^1$ and $G^2$, and a permutation $\pi$ of $V(G)$, the graph $\pi G$ is constructed by joining $u \in V(G^1)$ to $\pi(u) \in V(G^2)$ for all $u \in V(G^1)$. $G$ is said to be a universal fixer if the domination number of $\pi G$ is equal to the domination number of $G$ for all $\pi$ of $V(G)$. In 1999 it was conjectured that the only universal fixers are the edgeless graphs. Since then, a few partial results have been shown. In this paper, we prove the conjecture completely.
\endabstract

\section{Definitions and Notation}\label{sec1}
We consider only finite, simple, undirected graphs. The vertex set of a graph $G$ is denoted by $V(G)$ and its edge set by $E(G)$. The order of $G$, denoted by $|G|$, is the cardinality of $V(G)$. We will denote the graph consisting of $n$ isolated vertices as $\overline{K_n}$. The {\it open neighborhood} of $v \in V(G)$ is $N(v) = \{\ u \mid \ uv \in E(G)\ \}$, and the {\it open neighborhood} of a subset $D$ of vertices is $N(D) = \displaystyle{\cup_{v\in D}} N(v)$. The {\it closed neighborhood} of $v$ is $N[v] = N(v) \cup \{v\}$, and the closed neighborhood of a subset $D$ of vertices is $N[D] = N(D) \cup D$. A set $S \subseteq V(G)$ is a {\it $2$-packing} of $G$ if $N[x] \cap N[y]=\emptyset$ for every pair of distinct vertices $x$ and $y$ in $S$.

Given two sets $A$ and $B$ of $V(G)$, we say $A$ {\it dominates} $B$ if $B \subseteq N[A]$, and a set $D \subseteq V(G)$ dominates $G$ if $V(G) = N[D]$. The domination number, denoted $\gamma(G)$, is the minimum cardinality of a dominating set of $G$. A $\gamma$-set of $G$ is a dominating set of $G$ of cardinality $\gamma(G)$. 

Given a graph $G$ and any permutation $\pi$ of $V(G)$, the {\it prism of $G$ with respect to $\pi$} is the graph $\pi G$ obtained by taking two disjoint copies of $G$, denoted $G^1$ and $G^2$, and joining every $u \in V(G^1)$ with $\pi(u) \in V(G^2)$. That is, the edges between $G^1$ and $G^2$ form a perfect matching in $\pi G$. For any subset $A \subseteq V(G)$, we let $\pi(A) = \cup_{v \in A} \pi(v)$.

 If $\pi$ is the identity $\bold{1}_G$, then $\pi G \cong G \cartprod  K_2$, the {\it Cartesian product} of $G$ and $K_2$. The graph $G \cartprod K_2$ is often referred to as the {\it prism of} $G$, and the domination number of this graph has been studied by Hartnell and Rall in \cite{RefWorks:8}. 

One can easily verify that $\gamma(G) \le \gamma(\pi G) \le 2\gamma(G)$ for all $\pi$ of $V(G)$. If $\gamma(\pi G) = \gamma(G)$ for some permutation $\pi$ of $V(G)$, then we say $G$ is a {\it $\pi$-fixer}. If $G$ is a $\bold{1}_G$-fixer, then $G$ is said to be a {\it prism fixer}. Moreover, if $\gamma(\pi G) = \gamma(G)$ for all $\pi$, then we say $G$ is a {\it universal fixer}.

In 1999, Gu \cite{RefWorks:14} conjectured that a graph $G$ of order $n$ is a universal fixer if and only if $G = \overline{K_n}$. Clearly if $G=\overline{K_n}$, then for any $\pi$ of $V(G)$ we have $\gamma(\pi G)=n=\gamma(G)$. It is the other direction, the question of whether the edgeless graphs are the only universal fixers, that is far more interesting and is the focus of this paper. Over the past decade, it has been shown that a few classes of graphs do not contain any universal fixers. In particular, given a nontrivial connected graph $G$, Gibson \cite{RefWorks:7} showed that there exists some $\pi$ such that $\gamma(G) \ne \gamma(\pi G)$ if $G$ is bipartite. Cockayne, Gibson, and Mynhardt \cite{RefWorks:11} later proved this to be true when $G$ is claw-free. Mynhardt and Xu \cite{RefWorks:15} also showed if $G$ satisfies $\gamma(G) \le 3$, then $G$ is not a universal fixer. Other partial results can be found in \cite{RefWorks:9, RefWorks:13}. The purpose of this paper is to prove Gu's conjecture, which we state as the following theorem.

\begin{theorem}\label{conj}
A graph $G$ of order $n$ is a universal fixer if and only if $G = \overline{K_n}$.
\end{theorem}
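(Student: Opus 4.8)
The plan is to prove the contrapositive of the interesting direction: if $G$ has at least one edge, then $G$ is not a universal fixer, i.e., there exists some permutation $\pi$ of $V(G)$ with $\gamma(\pi G) > \gamma(G)$. The backward direction is already handled in the excerpt, so I focus entirely on constructing such a ``bad'' permutation.

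First I would set up the machinery for analyzing dominating sets of $\pi G$. Any dominating set $D$ of $\pi G$ splits as $D = D_1 \cup D_2$ with $D_1 \subseteq V(G^1)$ and $D_2 \subseteq V(G^2)$, and if $G$ is a $\pi$-fixer then $|D_1| + |D_2| = \gamma(G)$. The key structural observation to establish is that each $D_i$, when projected back into $G$, must behave almost like a dominating set: a vertex $u \in V(G^1)$ not dominated within $G^1$ by $D_1$ must be covered by the matching edge, forcing $\pi(u) \in D_2$; symmetrically for $G^2$. I would formalize this by defining, for a $\gamma(\pi G)$-set $D$, the sets of vertices in each copy that are ``rescued'' only by the matching, and show these projections interlock tightly with the $2$-packing condition from Section~\ref{sec1}. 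The goal is to prove that the existence of an efficient (size $\gamma(G)$) dominating set of $\pi G$ for \emph{every} $\pi$ imposes a rigid symmetric structure on the $\gamma$-sets of $G$ themselves.

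The heart of the argument is then a careful case analysis on the $\gamma$-sets of $G$. Building on the earlier partial results quoted in the excerpt, I may assume $\gamma(G) \ge 4$ and that $G$ is neither bipartite nor claw-free, which already narrows the structure considerably. I would fix a vertex $v$ incident to an edge and engineer $\pi$ so that the matching edges are ``misaligned'' with the symmetric structure forced above: concretely, choose $\pi$ to be a transposition or a product of a few transpositions that moves a carefully selected vertex off the position demanded by the interlocking condition. Then I argue that no splitting $D_1 \cup D_2$ of size $\gamma(G)$ can simultaneously dominate both copies through this $\pi$, by counting: the vertices left undominated in $G^1$ require their $\pi$-images in $D_2$, but those images over-constrain $D_2$ so that $D_2$ fails to dominate $G^2$ within its budget.

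The main obstacle I anticipate is precisely this construction of the offending permutation in full generality: the partial results dispose of bipartite, claw-free, and small-$\gamma$ cases, but the remaining graphs have no clean structural handle, so the permutation $\pi$ must be described in terms of an arbitrary $\gamma$-set and its neighborhood structure rather than by an explicit formula. I expect the delicate point to be showing that whatever $\pi$ I build genuinely increases the domination number for \emph{all} possible choices of $D_1, D_2$, which requires ruling out the possibility that the rigidity forced in the second paragraph is so strong that $G$ admits multiple symmetric $\gamma$-sets that between them defeat any single $\pi$. Handling that likely demands extracting from the universal-fixer hypothesis a canonical pairing or involution on a distinguished $\gamma$-set and deriving a contradiction with the presence of an edge, for instance by exhibiting two adjacent vertices whose forced roles in the symmetric structure are incompatible.
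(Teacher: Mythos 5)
Your overall frame --- prove the contrapositive, reduce to connected graphs with $\gamma(G)\ge 4$ via the quoted partial results, note that a vertex undominated inside its own copy must be rescued through its matching edge, and then build a permutation that over-constrains any splitting $D_1\cup D_2$ of size $\gamma(G)$ --- is the right one and agrees with the paper's. But the proposal stops exactly where the work lives, and at one point it heads in a direction that would fail. You propose taking $\pi$ to be ``a transposition or a product of a few transpositions.'' That is too weak: such a $\pi$ agrees with the identity off a bounded set, so $\pi G$ is locally indistinguishable from $G\,\Box\,K_2$ away from the perturbation, and a prism fixer generally has enough symmetric $\gamma$-sets to route around a local defect. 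The construction that actually works is global: starting from a symmetric $\gamma$-set $D=[D_1,D_2]$ (which a universal fixer must have, being a prism fixer), one uses the fact that $D_1$ is a $2$-packing to pick a neighbor $u_1$ of $x_1$ private from the rest of $D_1$ and then cyclically shifts the \emph{entire} set $D_1\cup\{u_1\}$, fixing everything else. The long cycle is essential because it propagates: if any single $x_j^1$ lies in $R^{(1)}\cup S^1$, the cycle forces every $x_i$ and $u_1$ into $R\cup S$ in both copies, and the edge $x_1u_1$ then yields a forbidden adjacency between the dominating set and the set it fails to dominate internally. A transposition creates no such chain. (Also, assuming $G$ is neither bipartite nor claw-free buys you nothing; the paper never uses those hypotheses, only connectivity and $\gamma(G)\ge 4$.)

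The obstacle you name at the end --- that $G$ may admit several symmetric $\gamma$-sets which between them defeat any single $\pi$ --- is indeed the crux, but ``extracting a canonical pairing or involution'' is not how it is overcome, and your outline gives no workable substitute. The paper resolves it by a trichotomy on the family of \emph{even} symmetric $\gamma$-sets (those with $|D_1|=|D_2|$): either some even symmetric $\gamma$-set meets every other even one, or there is no even symmetric $\gamma$-set, or there exist two disjoint even ones. The first two cases succumb to the cyclic shift above combined with a pigeonhole comparison of the part sizes of two symmetric $\gamma$-sets (the paper's Property~\ref{intersection}); the third requires a genuinely different permutation that threads one long cycle through the union of the first parts of a maximal family of pairwise disjoint even symmetric $\gamma$-sets, with the contradiction resting on that cycle's indecomposability into compatible subcycles. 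Without this case split and these two explicit constructions, the argument does not close.
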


Although the following observation is stated throughout the literature, we give a short proof here for the sake of completeness. 

\begin{observation}\label{disconnect}
Let $G$ be a disconnected graph that contains at least one edge. If $G$ is a universal fixer, then every connected component of $G$ is a universal fixer.
\end{observation}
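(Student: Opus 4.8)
The plan is to exploit the fact that both the domination number and the prism construction respect the decomposition of $G$ into connected components. Write $G = G_1 \cup G_2 \cup \cdots \cup G_k$ as the disjoint union of its connected components, where $k \ge 2$ since $G$ is disconnected. Because a dominating set of $G$ restricts to a dominating set of each component and, conversely, a union of dominating sets of the components dominates $G$, the domination number is additive: $\gamma(G) = \sum_{i=1}^k \gamma(G_i)$.

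First I would fix an arbitrary component, say $G_j$, and an arbitrary permutation $\sigma$ of $V(G_j)$, with the goal of showing $\gamma(\sigma G_j) = \gamma(G_j)$. To bring the hypothesis that $G$ is a universal fixer to bear, extend $\sigma$ to a permutation $\pi$ of $V(G)$ by setting $\pi|_{V(G_j)} = \sigma$ and letting $\pi$ act as the identity on every other component. Since each $\pi_i := \pi|_{V(G_i)}$ maps $V(G_i)$ onto itself, the matching edges of $\pi G$ never cross between distinct component-pairs, so $\pi G$ is the disjoint union of the prisms $\pi_i G_i$ for $i = 1, \ldots, k$ (each of which is connected, being the prism of a connected graph). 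Hence $\gamma(\pi G) = \sum_{i=1}^k \gamma(\pi_i G_i)$.

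The final step combines the three facts. The universal-fixer hypothesis gives $\gamma(\pi G) = \gamma(G)$, so $\sum_{i=1}^k \gamma(\pi_i G_i) = \sum_{i=1}^k \gamma(G_i)$. The general inequality $\gamma(H) \le \gamma(\rho H)$ recorded in the excerpt gives $\gamma(\pi_i G_i) \ge \gamma(G_i)$ for every $i$. A sum in which each term is at least the corresponding term on the other side, yet the two sums are equal, forces equality term-by-term, so $\gamma(\pi_i G_i) = \gamma(G_i)$ for all $i$; in particular $\gamma(\sigma G_j) = \gamma(G_j)$. Since $\sigma$ was an arbitrary permutation of $V(G_j)$, the component $G_j$ is a universal fixer, and since $j$ was arbitrary, every component of $G$ is a universal fixer.

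The argument presents no serious obstacle; the one point that warrants care is the verification that $\pi G$ decomposes into the component prisms $\pi_i G_i$. This relies precisely on the choice of $\pi$ as component-preserving, so that no matching edge joins vertices lying in two different components, and it is what lets the additivity of $\gamma$ turn the global equality $\gamma(\pi G) = \gamma(G)$ into the local equalities on each component.
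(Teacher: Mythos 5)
Your proof is correct and takes essentially the same approach as the paper: you extend a permutation of one component by the identity elsewhere, decompose $\pi G$ into the disjoint union of the component prisms, and combine additivity of $\gamma$ with the inequality $\gamma(H) \le \gamma(\rho H)$. The paper merely phrases this as a contrapositive (a bad permutation on one component forces $\gamma(\pi G) > \gamma(G)$), whereas you argue directly that the term-by-term inequalities must all be equalities; the content is identical.
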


\begin{proof}
Let $G$ be a disconnected graph containing at least one edge, and let $C_1, \cdots, C_k$ represent the connected components of $G$ where $k \ge 2$. Suppose, for some $j \in \{1, \cdots, k\}$, that $C_j$ is not a universal fixer. There exists a permutation $\pi_j: V(C_j) \to V(C_j)$ such that $\gamma(\pi_j C_j) > \gamma(C_j)$. Now define $\pi:V(G) \to V(G)$ by
\[\pi(x) = \begin{cases} x & \text{if $x \in V(G)\backslash V(C_j)$}\\ \pi_j(x) & \text{if $x \in V(C_j)$}.\end{cases}
\]
Note that $\pi G$ is a disconnected graph which can be written as the disjoint union
\[\left(\bigcup_{i \ne j} C_i \cartprod K_2\right) \cup \ \pi_j C_j.\]

Thus, 
\begin{eqnarray*}
\gamma(\pi G) &=& \gamma\left(\bigcup_{i \ne j} C_i \cartprod K_2\right) +\gamma(\pi_j C_j)\\
&>& \sum_{i\ne j} \gamma\left(C_i \cartprod K_2\right) + \gamma(C_j)\\
&\ge& \gamma(G).
\end{eqnarray*}
Therefore, if there exists a permutation $\pi$ of a connected component $C_j$ of $G$ such that $C_j$ is not a $\pi$-fixer, then $G$ is not a universal fixer. The result follows.
\end{proof}

This observation along with the results of Mynhardt and Xu \cite{RefWorks:15} allow us to consider only nontrivial connected graphs with domination number at least 4. Therefore, we focus on proving the following slightly more specific version of Theorem \ref{conj}.

\begin{theorem}\label{gu}
A nontrivial connected graph $G$ of order $n$ with $\gamma(G) \ge 4$ is a universal fixer if and only if $G = \overline{K_n}$.
\end{theorem}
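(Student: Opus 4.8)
The plan is to prove the contrapositive: assuming $G$ is a nontrivial connected graph with $\gamma(G)\ge 4$ that contains at least one edge, I would exhibit a permutation $\pi$ of $V(G)$ for which $\gamma(\pi G) > \gamma(G)$, thereby showing $G$ is not a universal fixer. The natural strategy is to argue by contradiction: suppose $G$ \emph{is} a universal fixer, so that for every $\pi$ there is a $\gamma$-set $D$ of $\pi G$ with $|D|=\gamma(G)$. I would first record the structural consequences that such a $D$ forces. Writing $D_1 = D\cap V(G^1)$ and $D_2 = D\cap V(G^2)$, and letting $\gamma=\gamma(G)$, the fact that $|D_1|+|D_2|=\gamma$ while each copy $G^i$ must be dominated (using the matching edges only to cover the few vertices not hit within a copy) is extremely restrictive. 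The key preliminary lemma I expect to need is a ``projection'' statement: the sets $D_1$ and $\pi^{-1}(D_2)$ (both viewed in $V(G)$) behave almost like $\gamma$-sets of $G$, and the vertices of $G$ not dominated by $D_1$ in $G^1$ must be matched precisely to vertices dominated in $G^2$, and symmetrically. This should yield a partition-type condition linking $D_1$, $D_2$, and the permutation, something like each vertex of one copy being dominated either internally or via its unique matched partner.

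Next I would isolate the combinatorial heart of the argument by choosing $\pi$ adversarially rather than universally. Since $G$ is assumed to be a universal fixer, I get to test it against \emph{every} permutation, so the proof should fix one clever family of permutations that forces a contradiction. The idea I would pursue is to start from a fixed minimum dominating set $D_0$ of $G$ and build $\pi$ so as to misalign $D_0$ with itself across the two copies: for instance, pick an edge $uv$ of $G$ and define $\pi$ to be a transposition or a short product of transpositions that moves a carefully chosen vertex off the spot where $D$ would want it, so that no set of size $\gamma$ can simultaneously dominate $G^1$, dominate $G^2$, and respect the matching. The connectivity of $G$ and the presence of an edge are exactly what let me propagate a local obstruction; the hypothesis $\gamma(G)\ge 4$ rules out the small sporadic universal-fixer-like configurations already handled in \cite{RefWorks:15} and gives enough ``room'' in $D$ to force the incompatible constraints.

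Concretely, I anticipate the following sequence of steps. First, establish the projection lemma above and deduce that if $G$ is a universal fixer then for every $\pi$ the associated $D_1,D_2$ are $2$-packing-like or have a very rigid domination pattern; the notion of $2$-packing defined in Section~\ref{sec1} is presumably the right language, since efficient domination across the prism forces near-disjointness of closed neighborhoods. Second, use this rigidity to show that a universal fixer must satisfy a self-referential condition: there is essentially a canonical $\gamma$-set whose structure is preserved under all $\pi$, which is only possible if neighborhoods are trivial. Third, convert ``neighborhoods are trivial'' into $G=\overline{K_n}$, contradicting the presence of an edge. Throughout, I would handle separately the two regimes according to how the $\gamma$ vertices of $D$ split between $G^1$ and $G^2$, since a balanced split and a lopsided split give different obstructions.

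The main obstacle I expect is the projection/rigidity step: translating the global equality $\gamma(\pi G)=\gamma(G)$ into a clean, permutation-independent structural statement about $G$ itself. The difficulty is that a single $\gamma$-set of $\pi G$ tells me only about that one $\pi$, whereas I need a property that survives \emph{all} $\pi$ and pins down the neighborhood structure of $G$. Managing the bookkeeping of which vertices are dominated internally versus through the matching, and ensuring the adversarial $\pi$ I construct genuinely defeats every possible $\gamma$-set of $\pi G$ (not just the expected one), is where the real work lies; the hypotheses of connectivity, at least one edge, and $\gamma(G)\ge 4$ will all be consumed here to close off the remaining small cases and to guarantee the local obstruction cannot be repaired elsewhere in the graph.
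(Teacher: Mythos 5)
Your general framing is right, and your ``projection lemma'' is exactly the known characterization the paper imports as Lemma~\ref{separable} (together with Lemma~\ref{symmetric}: the two halves $D_1$, $D_2$ of a symmetric $\gamma$-set are maximal $2$-packings). But there is a genuine gap at the step you yourself flag as the hard one: the choice of the adversarial permutation. A transposition, or any ``short product of transpositions'' supported near a single edge $uv$, cannot work in general. By Lemma~\ref{separable}, $\gamma(\pi G)=\gamma(G)$ as soon as $G$ has \emph{some} symmetric $\gamma$-set on which $\pi$ acts compatibly --- in particular any symmetric $\gamma$-set all of whose vertices are fixed by $\pi$ defeats you. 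Since a prism fixer may have many symmetric $\gamma$-sets spread all over the graph, a local perturbation is repaired by a $\gamma$-set that avoids its support; this is exactly the difficulty you mention in your last paragraph, but you supply no mechanism to overcome it. The paper's permutation is not local: it fixes a symmetric $\gamma$-set $D=[D_1,D_2]$ with $D_1=\{x_1,\dots,x_k\}$ a maximal $2$-packing and $u_1$ a private neighbor of $x_1$, and uses the long cycle $(x_1\,x_2\cdots x_k\,u_1)$, which scrambles the entire $2$-packing. Even this only suffices when $D$ meets every even symmetric $\gamma$-set (Theorems~\ref{oneven} and~\ref{odd}, which use Property~\ref{intersection} to force any candidate dominating set of $\alpha G$ to interact with $D_1$); when $G$ has two or more pairwise disjoint even symmetric $\gamma$-sets, a still more elaborate permutation is needed that cyclically threads through \emph{all} of the disjoint $2$-packings $X_1,\dots,X_m$ simultaneously (Theorem~\ref{multi-even}). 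Your proposal contains no analogue of this trichotomy, and that is where essentially all of the work lies.

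A second, smaller objection: your planned endgame --- that rigidity under all $\pi$ forces ``neighborhoods are trivial'' and hence $G=\overline{K_n}$ --- cannot be realized for a connected graph. Prism fixers with $\gamma(G)\ge 4$ do exist and have minimum degree at least $2$ (Lemma~\ref{symmetric}(b)), so no argument will show their neighborhood structure collapses. The correct conclusion is not a structural degeneration of $G$ but the exhibition, for each such $G$, of one permutation $\alpha$ with $\gamma(\alpha G)>\gamma(G)$; the stated equivalence then holds because a nontrivial connected graph is never edgeless, so the ``if'' direction is vacuous in this setting.
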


The remainder of the paper is organized as follows. Section 2 is dedicated to previous results that will be useful in the proof of Theorem \ref{gu}. The proof of Theorem \ref{gu} is given in Section 3.

\section{Known Results}\label{sec2}

In order to study $\pi$-fixers, we will make use of the following results.

\begin{lemma}\label{separable}\cite{RefWorks:15}
Let $G$ be a connected graph of order $n \ge 2$ and $\pi$ a permutation of $V(G)$. Then $\gamma(\pi G) = \gamma(G)$ if and only if $G$ has a $\gamma$-set $D$ such that
\begin{enumerate}
\item[(a)] $D$ admits a partition $D=D_1 \cup D_2$ where $D_1$ dominates $V(G)\backslash D_2$;
\item[(b)] $\pi(D)$ is a $\gamma$-set of $G$ and $\pi(D_2)$ dominates $V(G)\backslash \pi(D_1)$.
\end{enumerate}
\end{lemma}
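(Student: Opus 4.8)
The plan is to establish a correspondence between minimum dominating sets of $\pi G$ and $\gamma$-sets of $G$ carrying the partition structure described in the statement. Throughout I would identify $V(G^1)$ and $V(G^2)$ with $V(G)$ in the natural way, writing $u^1$ and $u^2$ for the two copies of $u \in V(G)$, and keep careful track of the matching: the cross edges join $u^1$ to $\pi(u)^2$. The one fact I would record first is that a set $S = S^1 \cup S^2$ with $S^i \subseteq V(G^i)$ dominates a vertex $u^1$ exactly when $u \in N_G[S^1]$ or the matched partner $\pi(u)^2$ lies in $S^2$, and symmetrically that $w^2$ is dominated exactly when $w \in N_G[S^2]$ or $\pi^{-1}(w)^1 \in S^1$. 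Everything reduces to translating these two membership conditions through $\pi$.

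For the forward direction, suppose $\gamma(\pi G) = \gamma(G)$ and let $S$ be a dominating set of $\pi G$ with $|S| = \gamma(G)$, split as $S = S^1 \cup S^2$. I would set $D_1 \subseteq V(G)$ to be the set with $S^1 = \{u^1 : u \in D_1\}$, and define $D_2$ by pulling $S^2$ back through $\pi$, i.e. $S^2 = \{\pi(v)^2 : v \in D_2\}$, so that $\pi(u)^2 \in S^2$ precisely when $u \in D_2$. Domination of $V(G^1)$ then reads $V(G) = N_G[D_1] \cup D_2$, which is condition (a), and domination of $V(G^2)$ reads $V(G) = N_G[\pi(D_2)] \cup \pi(D_1)$, which is the domination clause of (b). Setting $D = D_1 \cup D_2$, condition (a) shows $D$ dominates $G$, so $|D| \ge \gamma(G)$; but $|D| \le |D_1| + |D_2| = |S| = \gamma(G)$, forcing both $|D| = \gamma(G)$ (so $D$ is a $\gamma$-set) and $D_1 \cap D_2 = \emptyset$ (so the union is genuinely a partition). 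The same counting gives $|\pi(D)| = \gamma(G)$, and since $\pi(D_2)$ dominates $V(G) \setminus \pi(D_1)$ the set $\pi(D)$ dominates $G$, hence is a $\gamma$-set, completing (b).

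The reverse direction runs the construction backwards. Given a $\gamma$-set $D = D_1 \cup D_2$ satisfying (a) and (b), I would define $S^1 = \{u^1 : u \in D_1\}$ and $S^2 = \{\pi(v)^2 : v \in D_2\}$ and set $S = S^1 \cup S^2$. Condition (a) guarantees every $u^1$ is dominated and the domination clause of (b) guarantees every $w^2$ is dominated, so $S$ dominates $\pi G$; since $D_1$ and $D_2$ are disjoint, $|S| = |D_1| + |D_2| = |D| = \gamma(G)$. Thus $\gamma(\pi G) \le \gamma(G)$, and combined with the general bound $\gamma(\pi G) \ge \gamma(G)$ recorded earlier this yields $\gamma(\pi G) = \gamma(G)$.

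The only genuinely delicate point is the bookkeeping of the $\pi$-twist: the second-copy part of the dominating set corresponds to $\pi(D_2)$ rather than to $D_2$, and getting the direction of $\pi$ versus $\pi^{-1}$ right in the two membership conditions is exactly what makes conditions (a) and (b) asymmetric. The disjointness of $D_1$ and $D_2$ is not assumed but falls out of the cardinality comparison, so I would be careful to present the domination argument before the counting argument rather than after.
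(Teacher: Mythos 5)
The paper does not prove this lemma; it is quoted verbatim from Mynhardt and Xu \cite{RefWorks:15}, so there is no in-paper proof to compare against. Your argument is correct and complete: the identification $S^1 \leftrightarrow D_1$, $S^2 \leftrightarrow \pi(D_2)$ is the right dictionary, you track the twist (the cross-neighbour of $u^1$ is $\pi(u)^2$ and of $w^2$ is $\pi^{-1}(w)^1$) correctly in both membership conditions, and the counting step that forces $D_1\cap D_2=\emptyset$ and makes $D$ and $\pi(D)$ into $\gamma$-sets is exactly what is needed. This is the standard translation argument and is presumably the proof in the cited source.
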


Note that if a graph $G$ is a universal fixer, then $G$ is also a prism fixer. So applying Lemma \ref{separable} to the permutation $\bold{1}_G$, we get the following type of $\gamma$-set.

\begin{definition}
A $\gamma$-set $D$ of $G$ is said to be {\it symmetric} if $D$ admits a partition $D=D_1 \cup D_2$ where 
\begin{enumerate}
\item[1.] $D_1$ dominates $V(G)\backslash D_2$, and
\item[2.] $D_2$ dominates $V(G)\backslash D_1$.
\end{enumerate}
We write $D=[D_1,D_2]$ to emphasize properties 1 and 2 of this partition of $D$.
\end{definition}

The following two results were shown by Hartnell and Rall \cite{RefWorks:8}, where some statements are in a slightly different form.

\begin{lemma}\label{symmetric}\cite{RefWorks:8}
If $D=[D_1, D_2]$ is a symmetric $\gamma$-set of $G$, then 
\begin{enumerate}
\item[(a)] $D$ is independent.
\item[(b)] $G$ has minimum degree at least $2$.
\item[(c)] $D_1$ and $D_2$ are maximal $2$-packings of $G$.
\item[(d)] For $i \in \{1,2\}$, $\sum_{x \in D_i} \deg x = |V(G)|- \gamma(G)$.
\end{enumerate} 
\end{lemma}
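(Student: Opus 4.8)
The plan is to prove each of the four parts of Lemma~\ref{symmetric}, using the defining properties of a symmetric $\gamma$-set $D=[D_1,D_2]$, namely that $D_1$ dominates $V(G)\setminus D_2$ and $D_2$ dominates $V(G)\setminus D_1$. The central observation I would exploit is that each vertex of $V(G)$ must be dominated, and a careful count of how many vertices each $D_i$ can cover, combined with the minimality $|D|=\gamma(G)$, forces a great deal of rigidity.

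For part~(a), that $D$ is independent, I would argue by contradiction. Suppose two vertices of $D$ are adjacent. The key is to show that the symmetric partition forces redundancy: if, say, two vertices of $D_1$ are adjacent, or a vertex of $D_1$ is adjacent to a vertex of $D_2$, then one can remove a vertex from $D$ and still dominate $G$, contradicting $|D|=\gamma(G)$. More precisely, I would use the fact that $D_1$ must dominate all of $V(G)\setminus D_2$ \emph{and} $D_2$ must dominate all of $V(G)\setminus D_1$; since every vertex of $D_2$ lies in $V(G)\setminus D_1$ (as $D_1\cap D_2=\emptyset$), each vertex of $D_2$ is dominated by $D_2$ itself, i.e.\ it is either in $D_2$ trivially or adjacent to another vertex of $D_2$. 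Tracking these domination requirements should expose a removable vertex whenever an edge is present inside $D$.

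For part~(b), minimum degree at least $2$, I would note that $G$ is connected of order $n\ge 2$ and $\gamma(G)\ge 1$; a vertex $v$ of degree $0$ is impossible by connectivity (for $n\ge 2$), and a vertex $v$ of degree $1$ with unique neighbor $w$ would, by the domination constraints, force both $v$ and its relationship to the two classes to collapse. Concretely, since $D$ is independent by part~(a), if $v$ has degree $1$ then either $v\in D$ (say $v\in D_1$) and $v$ contributes only itself plus $w$ to $D_1$'s domination, or $v\notin D$ and $v$ must be dominated by its single neighbor $w$, which then must lie in the appropriate class for \emph{both} domination conditions; I would show this produces a contradiction with the symmetric structure, using that both $D_1$ and $D_2$ must separately reach $v$ or cover the region containing $v$.

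For parts~(c) and~(d), which I expect to be the technical heart, I would proceed as follows. For~(c), to show $D_1$ and $D_2$ are $2$-packings, I would use independence from~(a) together with a counting argument: the condition that $D_2$ dominates everything outside $D_1$ means $N[D_1]$ and the coverage of $D_2$ must partition efficiently, and any overlap $N[x]\cap N[y]\ne\emptyset$ for $x,y\in D_1$ would create enough redundancy to again contradict minimality. Maximality of the $2$-packings follows because $D$ is a \emph{minimum} dominating set. For~(d), the degree-sum identity, I would count incidences: since $D_1$ is a $2$-packing (by~(c)), the closed neighborhoods $\{N[x]:x\in D_1\}$ are pairwise disjoint, so $\sum_{x\in D_1}(\deg x+1)=\big|\bigcup_{x\in D_1}N[x]\big|$; I would then argue that $D_1$ dominates precisely $V(G)\setminus D_2$ together with itself in a way that makes this union equal to $V(G)\setminus D_2$, whence $\sum_{x\in D_1}(\deg x+1)=n-|D_2|$. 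Since $|D_1|+|D_2|=\gamma(G)$, rearranging gives $\sum_{x\in D_1}\deg x = n-\gamma(G)$, and the symmetric statement for $D_2$ follows identically. The main obstacle I anticipate is pinning down exactly which vertices each $D_i$ covers — in particular verifying that the disjoint closed neighborhoods of $D_1$ exhaust $V(G)\setminus D_2$ with no leftover and no double-counting — since this is where the interplay between the two domination conditions and the $2$-packing property must be made airtight.
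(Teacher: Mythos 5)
The paper does not actually prove this lemma --- it is quoted from Hartnell and Rall \cite{RefWorks:8} --- so there is no internal proof to compare against; I am judging your plan on its own merits. Your overall strategy is the standard one, and it is sound where you have made it concrete. For (a), if $u,v\in D$ are adjacent then $D\setminus\{v\}$ still dominates ($D_1$ or $D_2$ covers everything outside the class containing $v$, the rest of $v$'s class covers itself, and $u$ covers $v$), contradicting minimality. For the $2$-packing half of (c), if distinct nonadjacent $x,y\in D_1$ satisfy $N[x]\cap N[y]\ni w$, then $w\notin D$ by (a) and $(D\setminus\{x,y\})\cup\{w\}$ dominates $G$ with $\gamma(G)-1$ vertices. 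For (d), your count is exactly right once you add the one observation you flagged but did not supply: $N[D_1]\cap D_2=\emptyset$ because $D$ is independent and $D_1\cap D_2=\emptyset$, so the pairwise disjoint closed neighborhoods of the vertices of $D_1$ have union precisely $V(G)\setminus D_2$, giving $\sum_{x\in D_1}(\deg x+1)=n-|D_2|$.

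Two steps of your plan have genuine gaps. First, in (b) the case $v\in D$ is exactly where the work lies, and ``I would show this produces a contradiction with the symmetric structure'' is not an argument; the needed move is a less obvious two-for-one exchange. If $v\in D_1$ has unique neighbor $w$, then $w\notin D$ by (a), so $w$ is adjacent to some $z\in D_2$; now $(D\setminus\{v,z\})\cup\{w\}$ dominates $G$ (deleting $v$ from $D_1$ only endangers $N[v]=\{v,w\}$, both covered by $w$, and $w$ also covers $z$), which is a dominating set of size $\gamma(G)-1$. (Also note that excluding degree $0$ uses the standing hypothesis that $G$ is nontrivial and connected, not the symmetric structure: an isolated vertex placed in $D_1$ is consistent with the bare definition of a symmetric $\gamma$-set.) Second, your justification for maximality in (c) --- ``because $D$ is a minimum dominating set'' --- is not the right reason and does not suffice: $|D_1|+1\le\gamma(G)$ is perfectly compatible with minimality of $D$, since in general the $2$-packing number is only bounded above by $\gamma(G)$. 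Maximality instead follows from domination: any $v\notin D_1\cup D_2$ lies in $N[D_1]$, so $N[v]\cap N[D_1]\ne\emptyset$ and $D_1\cup\{v\}$ is not a $2$-packing; and any $v\in D_2$ has a neighbor $w$, which lies outside $D$ by independence and hence in $N[D_1]$, so again $N[v]\cap N[D_1]\ne\emptyset$.
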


\begin{theorem}\cite{RefWorks:8}
The conditions below are equivalent for any nontrivial, connected graph $G$.
\begin{enumerate}
\item[(a)] $G$ is a prism fixer.
\item[(b)] $G$ has a symmetric $\gamma$-set.
\item[(c)] $G$ has an independent $\gamma$-set $D$ that admits a partition $D=[D_1,D_2]$ such that each vertex in $V(G)\backslash D$ is adjacent to exactly one vertex in $D_i$ for $i \in\{1,2\}$, and each vertex in $D$ is adjacent to at least two vertices in $V(G)\backslash D$.
\end{enumerate}
\end{theorem}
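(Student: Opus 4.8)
The plan is to prove the equivalence of the three conditions by establishing a cycle of implications, say $(a) \Rightarrow (b) \Rightarrow (c) \Rightarrow (a)$, exploiting the fact that two of the implications are nearly immediate from the definitions and earlier lemmas. The implication $(a) \Rightarrow (b)$ follows directly from Lemma~\ref{separable} applied to the identity permutation $\bold{1}_G$: a prism fixer is by definition a $\bold{1}_G$-fixer, so $\gamma(\bold{1}_G G) = \gamma(G)$, and conditions (a) and (b) of Lemma~\ref{separable} collapse (since $\pi$ is the identity) to exactly the two defining properties of a symmetric $\gamma$-set $D = [D_1, D_2]$.

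Next I would tackle $(b) \Rightarrow (c)$, which is where the bulk of the structural work lies and which I expect to be the main obstacle. Starting from a symmetric $\gamma$-set $D = [D_1, D_2]$, I can immediately invoke Lemma~\ref{symmetric} to obtain that $D$ is independent, that $D_1$ and $D_2$ are maximal $2$-packings, and the degree-sum identity $\sum_{x \in D_i} \deg x = |V(G)| - \gamma(G)$. The heart of the argument is to convert these facts into the exact-adjacency statement in (c). Since $D_1$ is a $2$-packing, the closed neighborhoods $N[x]$ for $x \in D_1$ are pairwise disjoint; combined with the requirement that $D_1$ dominates $V(G) \setminus D_2$ and that $D$ is independent, this forces every vertex of $V(G) \setminus D$ to lie in exactly one $N(x)$ with $x \in D_1$, i.e. to have exactly one neighbor in $D_1$. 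A counting argument using part (d) of Lemma~\ref{symmetric} pins this down: the degree sum over $D_1$ counts edges from $D_1$ into $V(G) \setminus D$ (using independence of $D$), and this must equal $|V(G) \setminus D| = n - \gamma(G)$, leaving no room for any vertex outside $D$ to have two neighbors in $D_1$. The symmetric argument handles $D_2$. For the final clause of (c), that each vertex in $D$ has at least two neighbors in $V(G) \setminus D$, I would use part (b) of Lemma~\ref{symmetric}, namely $\delta(G) \ge 2$, together with independence of $D$: a vertex in $D$ has degree at least $2$ and none of its neighbors lie in $D$.

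Finally, $(c) \Rightarrow (a)$ reduces to verifying that the partition $D = [D_1, D_2]$ given in (c) is in fact symmetric, after which $G$ is a prism fixer by applying Lemma~\ref{separable} to $\bold{1}_G$ in the reverse direction. Here I must check that $D_1$ dominates $V(G) \setminus D_2$ and that $D_2$ dominates $V(G) \setminus D_1$. The exact-adjacency hypothesis says each vertex of $V(G) \setminus D$ has a neighbor in $D_1$ and a neighbor in $D_2$, so $D_1$ dominates all of $V(G) \setminus D \supseteq V(G) \setminus (D_1 \cup D_2) \cup D_2 \setminus D_2$; more carefully, $V(G) \setminus D_2 = (V(G) \setminus D) \cup D_1$, and each vertex of $V(G) \setminus D$ is dominated by $D_1$ while each vertex of $D_1$ dominates itself, giving property~1 of symmetry, and the analogous check gives property~2. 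The main obstacle throughout is the careful bookkeeping in $(b) \Rightarrow (c)$, where the $2$-packing and degree-sum conditions must be combined to rule out any vertex outside $D$ having more than one neighbor in a given $D_i$; the rest is routine once that counting is set up correctly.
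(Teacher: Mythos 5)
Your argument is correct, but note that the paper itself offers no proof of this statement: it is quoted from Hartnell and Rall \cite{RefWorks:8} as a known result, so there is no in-paper argument to compare against. Your cycle $(a)\Rightarrow(b)\Rightarrow(c)\Rightarrow(a)$ is a sound self-contained derivation from Lemma~\ref{separable} (applied to $\pi=\bold{1}_G$, where $\pi(D_i)=D_i$ makes its two conditions collapse to the definition of a symmetric $\gamma$-set) and Lemma~\ref{symmetric}. One small remark on $(b)\Rightarrow(c)$: the degree-sum identity from Lemma~\ref{symmetric}(d) is not actually needed there, since the $2$-packing property of $D_i$ already gives ``at most one neighbor in $D_i$'' for every vertex, and domination of $V(G)\backslash D_{3-i}$ together with independence of $D$ gives ``at least one''; your counting argument is a valid alternative route to the same conclusion, just redundant. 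The implication $(c)\Rightarrow(a)$ is handled correctly: the exact-adjacency condition shows $D=[D_1,D_2]$ is symmetric, and Lemma~\ref{separable} with the identity permutation then yields that $G$ is a prism fixer.
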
 

We shall add to this terminology that if a symmetric $\gamma$-set $D=[D_1,D_2]$ exists such that $|D_1|=|D_2|$, then $D$ is an {\it even} symmetric $\gamma$-set.

\section{Proof of Theorem \ref{gu}}
The proof of Theorem \ref{gu} is broken into three cases depending on the type of symmetric $\gamma$-sets a graph possesses. The following property will be useful in each of these cases.

\begin{property}\label{intersection}
Let $A=[A_1, A_2]$ and $B=[B_1, B_2]$ be symmetric $\gamma$-sets of $G$ such that $|A_1| \le |A_2|$ and $|B_1| \le |B_2|$. 
\begin{enumerate}
\item[(a)] If $|A_1| < |B_1|$, then $A_2 \cap B_1 \ne \emptyset$.
\item[(b)] If $|B_1| = |A_1| < |A_2|$, then $A_2 \cap B_2 \ne \emptyset$.
\end{enumerate}
\end{property}
\begin{proof}
\begin{enumerate}
\item[(a)] By assumption, $|B_1 \backslash A_1| >0$ and $A_1$ dominates $V(G)\backslash A_2$. If $A_2 \cap B_1 = \emptyset$, then by the pigeonhole principle there exists $v \in A_1$ such that $v$ dominates at least two vertices in $B_1$. This contradicts the fact that $B_1$ is a 2-packing. Therefore, $A_2 \cap B_1 \ne \emptyset$. 
\item[(b)] Since $|B_2|  =|A_2| > |A_1|$, replacing $B_1$ with $B_2$ in the above argument gives the desired result. 
\end{enumerate}
\end{proof}

We call the reader's attention to the fact that any universal fixer is inherently a prism fixer. Therefore, in each of the following proofs, we show that for every nontrivial connected prism fixer $G$ there exists a permutation $\alpha$ such that $\gamma(\alpha G) > \gamma(G)$. 

To prove the next three theorems, we introduce the following notation. Let $G$ be a graph and let $\pi$ be a permutation of $V(G)$. For each vertex $v \in V(G)$, we let $v^1$ represent the copy of $v$ in $G^1$ and $v^2$ represent the copy of $v$ in $G^2$. If $A \subseteq V(G)$, we define $A^i = \{\ v^i \mid \ v \in A\ \}$ for $i \in \{1, 2\}$. If $B$ is a set of vertices in the graph $\pi G$, then $B^{(i)} = B \cap V(G^i)$ for $i \in \{1, 2\}$. Additionally, if $B \subseteq V(\pi G)$, we will denote the set of vertices in the original copy of $G$ associated with the vertices of $B$ as $p(B) = \{v \in V(G) \mid \ v^1 \in B \ \text{or} \ v^2 \in B\ \}$.

\begin{theorem}\label{oneven}
Let $G$ be a nontrivial connected prism fixer with $\gamma(G) = 2k$ where $k \ge 2$. If $G$ contains an even symmetric $\gamma$-set $D$ such that $D$ intersects every even symmetric $\gamma$-set of $G$ nontrivially, then $G$ is not a universal fixer.
\end{theorem}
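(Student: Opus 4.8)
The plan is to argue by contradiction. Since every universal fixer is a prism fixer and, by hypothesis, $G$ is a nontrivial connected prism fixer possessing the even symmetric $\gamma$-set $D=[D_1,D_2]$ with $|D_1|=|D_2|=k$, it suffices to exhibit a single permutation $\alpha$ of $V(G)$ with $\gamma(\alpha G)>\gamma(G)=2k$. So I would assume toward a contradiction that $\gamma(\alpha G)=2k$ for the permutation $\alpha$ I am about to construct, fix a $\gamma$-set $S$ of $\alpha G$ (so $|S|=2k$), and aim to show that $S$ forces the existence of a symmetric $\gamma$-set of $G$ disjoint from $D$.

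First I would run the standard projection analysis in the prism. Writing $A=p(S^{(1)})$ and $B=p(S^{(2)})$, the matching edge $u^1\alpha(u)^2$ shows that a vertex $v^1$ is dominated by $S$ exactly when $v\in N_G[A]$ or $\alpha(v)\in B$; hence $A$ dominates $V(G)\setminus\alpha^{-1}(B)$ in $G$, and symmetrically $B$ dominates $V(G)\setminus\alpha(A)$. Since $|A|+|B|=|S|=2k=\gamma(G)$, both $A\cup\alpha^{-1}(B)$ and $\alpha(A)\cup B$ are dominating sets of $G$ of order at most $2k$, hence $\gamma$-sets, so in particular the unions are disjoint. I then want $\alpha$ chosen so that $E:=A\cup\alpha^{-1}(B)$ is in fact \emph{symmetric} with witnessing partition $[A,\alpha^{-1}(B)]$: condition~1 ($A$ dominates $V(G)\setminus\alpha^{-1}(B)$) already holds, and the substance is to transfer the relation ``$B$ dominates $V(G)\setminus\alpha(A)$'' through $\alpha^{-1}$ into ``$\alpha^{-1}(B)$ dominates $V(G)\setminus A$.''

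Constructing $\alpha$ is the heart of the argument. The permutation cannot stabilize the unordered pair $\{D_1,D_2\}$ setwise: if $\alpha(\{D_1,D_2\})=\{D_1,D_2\}$, then $\alpha(D)=D$ and the partition $[D_1,D_2]$ (or its reversal) still witnesses separability via Lemma~\ref{separable}, forcing $\gamma(\alpha G)=2k$. Thus $\alpha$ must genuinely mix $D_1$ and $D_2$, and it must be tailored to $D$ so as to spoil its symmetric structure under \emph{every} partition. Here I would exploit the rigid structure from the Hartnell--Rall characterization and Lemma~\ref{symmetric}: $D$ is independent, $D_1$ and $D_2$ are maximal $2$-packings, and each $v\in V(G)\setminus D$ has exactly one neighbor in $D_1$ and exactly one in $D_2$. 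The design I would pursue ``misaligns'' $D_1$ against $D_2$ (a rotation built from these unique-neighbor maps), so that for the resulting prism any order-$2k$ dominating set projects, through the analysis above, to a symmetric $\gamma$-set $E$ meeting $D$ in no vertex. The two verifications — that $E$ is genuinely symmetric (the domination transfer across $\alpha$, where the disjointness of the closed neighborhoods in the $2$-packings of Lemma~\ref{symmetric}(c) does the real work) and that $E\cap D=\emptyset$ — are where essentially all of the difficulty lies, and I expect this to be the main obstacle.

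Granting the extraction of a symmetric $\gamma$-set $E=[E_1,E_2]$ with $|E_1|\le|E_2|$ and $E\cap D=\emptyset$, the conclusion is immediate. If $|E_1|<k$, then Property~\ref{intersection}(a) applied to $E$ and $D$ (both symmetric, with $|E_1|<|D_1|=k$) gives $E_2\cap D_1\ne\emptyset$, contradicting $E\cap D=\emptyset$; hence $|E_1|\ge k$, and since $|E_1|+|E_2|=2k$ with $|E_1|\le|E_2|$ we obtain $|E_1|=|E_2|=k$, so $E$ is an \emph{even} symmetric $\gamma$-set. But $E$ is disjoint from $D$, contradicting the hypothesis that $D$ meets every even symmetric $\gamma$-set of $G$. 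Therefore $\gamma(\alpha G)>2k$, and $G$ is not a universal fixer.
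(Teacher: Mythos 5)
Your high-level strategy is the right one and matches the paper's: build a permutation $\alpha$ supported on $D_1$ together with one extra vertex (the paper uses the cycle $x_1\mapsto x_2\mapsto\cdots\mapsto x_k\mapsto u_1\mapsto x_1$, where $u_1\in N(x_1)\setminus\bigcup_{i\ge 2}N(x_i)$), assume a dominating set of $\alpha G$ of order $2k$, and in the ``untouched'' case extract a symmetric $\gamma$-set of $G$ whose interaction with $D$ is killed by Property~\ref{intersection} and the intersection hypothesis. Your endgame (if $|E_1|<k$ use Property~\ref{intersection}(a); otherwise $E$ is even and disjoint from $D$) is essentially the paper's Case~1. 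However, there is a genuine gap, and you have located it yourself: the claim that \emph{every} order-$2k$ dominating set of $\alpha G$ ``projects, through the analysis above, to a symmetric $\gamma$-set $E$ meeting $D$ in no vertex'' is not something you establish, and it is not how the argument can go. The transfer of ``$B$ dominates $V(G)\setminus\alpha(A)$'' into ``$\alpha^{-1}(B)$ dominates $V(G)\setminus A$'' fails precisely on the vertices moved by $\alpha$; the projection $p(R)$ is only symmetric when $R^{(1)}\cup S^1$ avoids the support $D_1^1\cup\{u_1^1\}$ of $\alpha$ entirely. The paper does not try to produce a symmetric $\gamma$-set in the remaining situations. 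Instead, its Cases~2--4 (the bulk of the proof) show that if any vertex of $D_1^1\cup\{u_1^1\}$ lies in $R^{(1)}\cup S^1$, one can propagate membership along the cycle defining $\alpha$ (using that $D_1$ is an independent $2$-packing and that no edge can join $R^{(i)}$ to $S^i$ without dropping below $2k$) until one forces $x_1^1,u_1^1\in R^{(1)}$, and then the special choice of $u_1$ --- its unique $D_1$-neighbor is $x_1$ --- produces a forbidden edge between $R^{(1)}$ and $S^1$. Without this analysis, and without an actual specification of $\alpha$ (your ``rotation built from the unique-neighbor maps'' is not pinned down), the proof is incomplete: you have reduced the theorem to exactly the step you concede you cannot yet carry out.

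A smaller point: your parenthetical that the two unions $A\cup\alpha^{-1}(B)$ and $\alpha(A)\cup B$ are $\gamma$-sets ``so in particular the unions are disjoint'' is fine, but note that this already is the content of Lemma~\ref{separable}, and the useful consequences in the paper are phrased instead through the sets $S^1,S^2$ of vertices not dominated within their own copy: these are $2$-packings, each is matched bijectively by $\alpha$ onto the opposite $R^{(i)}$, and no edge joins $R^{(i)}$ to $S^i$. Those three facts are the working tools for Cases~2--4, and your write-up never isolates them.
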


\begin{proof}
Let $D=[D_1,D_2]$ denote the even symmetric $\gamma$-set of $G$ which intersects every even symmetric $\gamma$-set of $G$ nontrivially. By definition, $|D_1|=|D_2| = k$ and by Lemma \ref{symmetric}(c), $D_1$ and $D_2$ are $2$-packings. Label the vertices of $D_1$ as $x_1, x_2, \cdots, x_k$. Since $D_1$ is nonempty and a $2$-packing , there exists a vertex $u_1 \in N(x_1)$ such that $u_1 \notin \bigcup_{i=2}^k N(x_i)$. 

Define the permutation $\alpha$ of $V(G)$ such that for $i = 1, \cdots, k-1$, we have $\alpha(x_i) = x_{i+1}$, $\alpha(x_k) = u_1$ and $\alpha(u_1)=x_1$. Let $\alpha(v)=v$ for all other $v \in V(G) \backslash \left(D_1 \cup \{u_1\}\right)$. Figure \ref{fig:alpha} illustrates $\alpha G$ with this particular permutation.

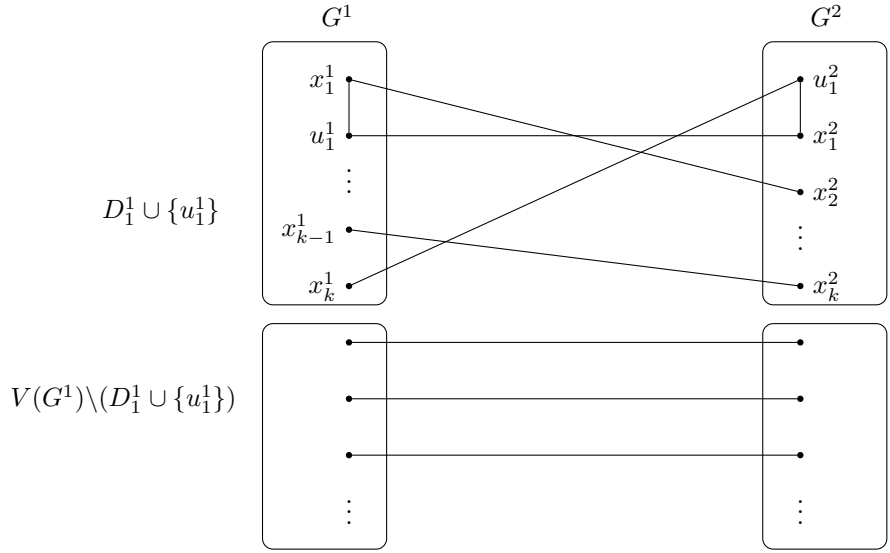
\begin{figure}[h!]
\begin{center}
\begin{tikzpicture}
	 \vertex[fill,minimum size=.07cm] (1a) at (0,0) [label=left:$x_k^1$]{};
	 \vertex[fill, minimum size=.07cm] (2a) at (0,.75) [label=left:$x_{k-1}^1$]{};
	 \vertex[fill, minimum size=.07cm] (3a) at (0,2.75) [label=left:$x_1^1$]{};
	 \vertex[fill, minimum size=.07cm] (5a) at (0, -.75) []{};
	 \vertex[fill, minimum size=.07cm] (6a) at (0, -1.5) []{};
	 \vertex[fill, minimum size=.07cm] (7a) at (0, -2.25) []{};
	 \node (dotss) at (0, -2.9) []{\vdots};
	 \draw (dotss);
	 \node  (dots) at (0,1.5) []{ \vdots};
	 \node (G1) at (-.15, 3.6) []{$G^1$};
	 \draw (G1);
	 \node (G2) at (6.35, 3.6) []{$G^2$};
	 \draw (G2);
 	 \draw (dots);
 	\vertex[fill, minimum size=.07cm] (4a) at (0,2) [label=left:$u_1^1$]{};
 	\vertex[fill, minimum size=.07cm] (1b) at (6,0) [label=right:$x_k^2$]{};
 	\node (V) at (-3,-1.5) []{$V(G^1)\backslash(D_1^1\cup \{u_1^1\})$};
 	\draw (V);
 	\node (dots2) at (6,.75) []{\vdots};
 	\draw (dots2);
	\node (v) at (10,0)[]{};
	\draw (v);
 	\vertex[fill, minimum size=.07cm] (2b) at (6,1.25) [label=right:$x_2^2$]{};
 	\vertex[fill, minimum size=.07cm] (3b) at (6,2) [label=right:$x_1^2$]{};
	 \vertex[fill, minimum size=.07cm] (4b) at (6,2.75) [label=right:$u_1^2$]{};
	 \vertex[fill, minimum size=.07cm] (5b) at (6, -.75) []{};
	 \vertex[fill, minimum size=.07cm] (6b) at (6, -1.5) []{};
	 \vertex[fill, minimum size=.07cm] (7b) at (6, -2.25) []{};
	 \node (dotS) at (6, -2.9) []{\vdots};
	 \draw (dotS);
	 \draw[rounded corners] (-1.15, -3.5) rectangle (.5, -.5);
	 \draw[rounded corners] (-1.15,-.25) rectangle (.5,3.25);
	 \draw[rounded corners] (5.5, -3.5) rectangle (7.15, -.5);
	 \node (D1) at (-2.5, 1) []{$D_1^1 \cup \{u_1^1\}$};
	 \draw (D1);
	 \draw[rounded corners] (5.5,-.25) rectangle (7.15,3.25);
 \path
 	(3a) edge (4a)
	(3b) edge (4b)
	(1a) edge (4b)
	(4a) edge (3b)
	(2a) edge (1b)
	(3a) edge (2b)
	(5a) edge (5b)
	(6a) edge (6b)
	(7a) edge (7b);
\end{tikzpicture}
\caption{$\alpha G$ where $D$ is an even symmetric $\gamma$-set that nontrivially intersects every even symmetric $\gamma$-set of $G$}
\label{fig:alpha}
\end{center}
\end{figure}

Suppose that $\alpha G$ has domination number $2k$ with dominating set $R = R^{(1)} \cup R^{(2)}$. Let $S^1$ be the vertices in $G^1$ that are not adjacent to $R^{(1)}$. Similarly, let $S^2$ be the vertices of $G^2$ that are not adjacent to $R^{(2)}$. Consequently, each vertex of $S^1$ must be dominated by precisely one vertex of $R^{(2)}$, and each vertex of $S^2$ must be dominated by precisely one vertex of $R^{(1)}$ according to $\alpha$. Furthermore, $S^1$ and $S^2$ are $2$-packings, since otherwise there would exist a dominating set of $G$ of order strictly less than $2k$.\\

\begin{enumerate}
\item[Case 1] Assume that $S^1\cap (D_1^1\cup \{u_1^1\}) = \emptyset$ and $R^{(1)}\cap (D_1^1\cup \{u_1^1\}) = \emptyset$.
By assumption, $\alpha(v) = v$ for each $v^1 \in R^{(1)} \cup S^1$. Since $\alpha(S^1) = R^{(2)}$ and $R^{(2)}$ dominates $G^2\backslash \alpha(R^{(1)})$, we have that $p(R) = [p(R^{(1)}), p(R^{(2)})]$ is a symmetric $\gamma$-set of $G$. By symmetry of $\alpha G$, we need only to consider two cases. If $|p(R^{(1)})|=k$, then $p(R)$ is an even symmetric $\gamma$-set. Since $D$ nontrivially  intersects each even symmetric $\gamma$-set of $G$, it follows that $D \cap p(R) \ne \emptyset$. Furthermore, since $\alpha(v)=v$ for each $v^1 \in R^{(1)} \cup S^1$, we know that $D \cap p(R) \subseteq D_2$. Without loss of generality, assume for some $y \in D_2$ that $y^1 \in R^{(1)}$ and $y^2 \in S^2$. This implies that each vertex of $D_1^1$ is either dominated by some vertex $v^1 \in R^{(1)}\backslash \{y^1\}$ or contained in $S^1$. By assumption, $S^1 \cap D_1^1 = \emptyset$ so each vertex of $D_1^1$ is dominated by some vertex $v^1 \in R^{(1)}\backslash \{y^1\}$. However, this contradicts the fact that $D_1^1$ is a $2$-packing since $|R^{(1)}\backslash \{y^1\}| = k-1 < |D_1^1|$. 

So assume $|p(R^{(1)})|<k$. Letting $p(R^{(1)})$ represent $A_1$ and $D_1$ represent $B_1$ in Property~\ref{intersection} (a), we know that $p(R^{(2)}) \cap D_1 \ne \emptyset$. This implies that within $G^2$ we have $R^{(2)} \cap D_1^2 \ne \emptyset$. Furthermore, since $\alpha(S^1) = R^{(2)}$ and $\alpha(v) = v$ for each $v^1 \in S^1$, it follows that $S^1 \cap D_1^1 \ne \emptyset$ within $G^1$, which contradicts our assumption. Therefore, this case cannot occur.

\item[Case 2] Assume that $u_1^1 \in R^{(1)} \cup S^1$.
Let us first suppose that $u_1^1 \in R^{(1)}$. It follows that $x_1^2 \in S^2$ by definition of $\alpha$. This implies that $u_1^2 \in N(R^{(2)})$, since $x_1^2$ is adjacent to $u_1^2$ and $S^2$ is a packing. Moreover, since $x_1$ is the only vertex of $D_1$ that is adjacent to $u_1$, there exists a vertex $v^2 \in R^{(2)}$ that dominates $u_1^2$ within $G^2$ where $\alpha(v) = v$. However, this implies that $v^1 \in S^1$ and there exists an edge between $R^{(1)}$ and $S^1$, because $u_1^1$ is adjacent to $v^1$. This contradicts the fact that $\gamma(G) = 2k$. So we may assume that $u_1^1 \in S^1$. It follows that $x_1^2 \in R^{(2)}$ by definition of $\alpha$. Furthermore, $x_1^1 \in N(R^{(1)})$, since $u_1^1$ is adjacent to $x_1^1$ and $S^1$ is a packing. So there exists a vertex $v^1 \in R^{(1)}$ that dominates $x_1^1$ within $G^1$ where $\alpha(v) = v$. This implies that $v^2 \in S^2$ and there exists an edge between $R^{(2)}$ and $S^2$, which is a contradiction. Therefore, this case cannot occur.

\item[Case 3] Suppose for some $j \in \{2, \cdots, k-1\}$ that $x_j^1 \in R^{(1)} \cup S^1$. We first wish to show that $x_1^1, u_1^1 \in R^{(1)}$. \\
Notice that since $x_j^1 \in R^{(1)} \cup S^1$, it follows that $x_{j+1}^2 \in R^{(2)} \cup S^2$. Now consider $x_j^2$. We claim that $x_j^2$ is not adjacent to any vertex of $R^{(2)}\backslash \{x_j^2\}$. To see this, suppose there exists a vertex $v^2 \in R^{(2)} \backslash \{x_j^2\}$ such that $x_j^2v^2 \in E(G^2)$. We know $v \notin D_1$, since $D_1$ is independent. Therefore, $v \in V(G)\backslash D_1$ where $\alpha(v)=v$. This implies that $x_j^1v^1 \in E(G^1)$, meaning that $R^{(1)} \cup S^1\backslash \{v^1\}$ is a dominating set of $G^1$ of order less than $2k$. This contradiction shows no such $v^2$ exists. Thus, $x_j^2 \in R^{(2)}\cup S^2$, which implies that $x_{j-1}^1 \in R^{(1)} \cup S^1$ according to $\alpha$.

On the other hand, when we try to locate $x_{j+1}^1 \in V(G^1)$, we know $x_{j+1}^1$ is not adjacent to any vertex of $R^{(1)}\backslash \{x_{j+1}^1\}$ by the same reasoning. Thus, $x_{j+1}^1 \in R^{(1)} \cup S^1$. Now, we can apply the same argument inductively to $x_{j+1}^1 \in R^{(1)} \cup S^1$ and $x_{j-1}^1 \in R^{(1)} \cup S^1$ until we arrive at the conclusion that $x_1^1, x_2^1, \cdots, x_k^1 \in R^{(1)} \cup S^1$ and $x_1^2, x_2^2, \cdots, x_k^2 \in R^{(2)} \cup S^2$. Since $x_1^2 \in R^{(2)} \cup S^2$, it follows that $u_1^1 \in R^{(1)} \cup S^1$. But $x_1^1 \in R^{(1)} \cup S^1$ as well and since $x_1^1$ and $u_1^1$ are adjacent, either $x_1^1, u_1^1 \in R^{(1)}$ or $x_1^1, u_1^1 \in S^1$. Furthermore, we can eliminate the case that both are in $S^1$, since $S^1$ is a $2$-packing. Thus, $x_1^1, u_1^1 \in R^{(1)}$. \\

For the remainder of this case, refer to Figure \ref{fig:case}. We know $x_1^2 \in S^2$, since $u_1^1 \in R^{(1)}$. Moreover,  $u_1^2 \in N(R^{(2)})$ since $S^2$ is a $2$-packing, but $u_1^2$ is not in the set $R^{(2)}$ itself since there is no edge between $S^2$ and $R^{(2)}$. In order for $u_1^2$ to be dominated within $G^2$, $u_1^2$ is adjacent to some vertex $v^2 \in R^{(2)}$ such that $v^1 \in S^1$. That is, since $u_1 \notin \bigcup_{i=2}^k N(x_i)$, there exists some vertex $v \in V(G)$ such that $u_1 \in N(v)$ and $\alpha(v)=v$. However, it was assumed that $u_1^1 \in R^{(1)}$. So this would imply that there exists an edge between $R^{(1)}$ and $S^1$, violating $\gamma(G) = 2k$. 
Therefore, this case cannot occur.

\begin{figure}[h]
\centering
\begin{tikzpicture}
\draw[rounded corners] (-2.25, 1.8) rectangle (1.25, 3);
\draw[rounded corners] (4.75, 1.8) rectangle (8.5, 3);
\draw[rounded corners] (-2.25, -1.2) rectangle (1.25, 0);
\draw[rounded corners] (4.75, -1.2) rectangle (8.5, 0);
\vertex[fill, minimum size=.1cm] (x1) at (-1,2.5) [label=below:$x_1^1$]{};
\vertex[fill, minimum size=.1cm] (u1) at (0,2.5) [label=below:$u_1^1$]{};
\node (R1) at (-.5,3.5) []{$R^{(1)}$};
\draw (R1);
\vertex[fill, minimum size=.1cm] (x2) at (6,2.5) [label=right:$x_1^2$]{};
\vertex[fill, minimum size=.1cm] (u2) at (6,1.25) [label=right:$u_1^2$]{};
\node (S2) at (6.5,3.5) []{$S^2$};
\draw (S2);
\vertex[fill, minimum size=.1cm] (v2) at (6,-.5) [label=below:$v^2$]{};
\vertex[fill, minimum size=.1cm] (v1) at (-1,-.5) [label=below:$v^1$]{};
\draw (v1);
\node (S1) at (-.5,.45) []{$S^1$};
\draw (S1);
\node (R2) at (6.5,.45) []{$R^{(2)}$};
\draw (R2);
 \node (alpha) at (2.95,1) []{$\alpha$};
 \draw (alpha);
  \node (a) at (1.75,1.25) []{};
 \node (b) at (4.25,1.25) []{};
 \draw[thick,->] (a) -- (b);
\path
	(x1) edge (u1)
	(x2) edge (u2)
	(u2) edge (v2);
\end{tikzpicture}
\caption{Location of $x_1^1$ and $u_1^1$ in $G^1$}
\label{fig:case}
\end{figure}
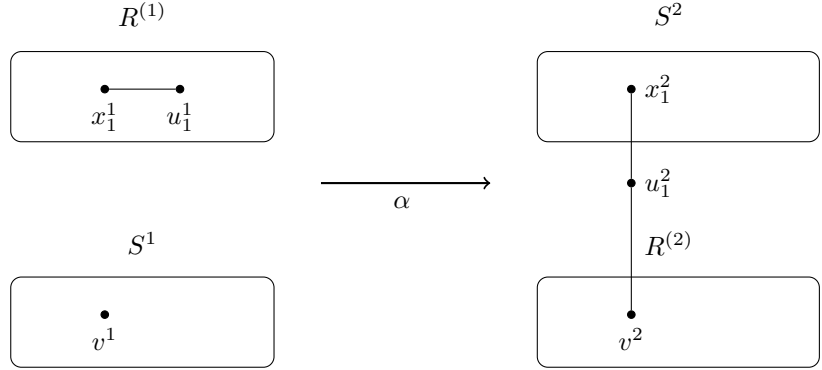

\item[Case 4] Assume that either $x_1^1$ or $x_k^1$ is in $R^{(1)} \cup S^1$. Applying similar arguments as in Case 2 yields the same contradiction. Therefore, this case cannot occur either. 
\end{enumerate}
Thus, no such dominating set $R$ exists for $\alpha G$ and the result follows.
\end{proof}
\vskip5mm

\begin{theorem}\label{odd}
Let $G$ be a nontrivial connected prism fixer with $\gamma(G) = m\ge 4$. If $G$ does not contain an even symmetric $\gamma$-set, then $G$ is not a universal fixer.
\end{theorem}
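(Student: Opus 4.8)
The plan is to mirror the strategy of Theorem~\ref{oneven}: exhibit, for a carefully chosen symmetric $\gamma$-set, a single permutation $\alpha$ under which $\pi G$ has domination number strictly greater than $\gamma(G)$. Since $G$ is a prism fixer, it possesses a symmetric $\gamma$-set $D=[D_1,D_2]$, and by hypothesis no such set is even, so every symmetric $\gamma$-set has $|D_1|\neq|D_2|$. I would fix a symmetric $\gamma$-set $D=[D_1,D_2]$ with $|D_1|<|D_2|$ chosen to be extremal in a useful sense (for instance, minimizing $|D_1|$, or minimizing $|D_1|$ among all symmetric $\gamma$-sets and then using Property~\ref{intersection}(a) to force intersections with the smaller part $D_1$). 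Labeling $D_1=\{x_1,\dots,x_\ell\}$ with $\ell<m-\ell$, I would define $\alpha$ by a cyclic shift through $D_1$ together with one private-neighbor swap, exactly as in Theorem~\ref{oneven}: since $D_1$ is a $2$-packing (Lemma~\ref{symmetric}(c)), there is a private neighbor $u_1\in N(x_1)\setminus\bigcup_{i\ge 2}N(x_i)$, and I would set $\alpha(x_i)=x_{i+1}$ for $i<\ell$, $\alpha(x_\ell)=u_1$, $\alpha(u_1)=x_1$, and $\alpha=\mathrm{id}$ elsewhere.

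Next I would argue by contradiction: suppose $\gamma(\alpha G)=m$ with dominating set $R=R^{(1)}\cup R^{(2)}$, and set $S^1,S^2$ to be the vertices of $G^1,G^2$ not dominated within their own copy, so that $\alpha(S^1)=R^{(2)}$ and $\alpha(S^2)=R^{(1)}$, and $S^1,S^2$ are $2$-packings (otherwise $G$ would have a dominating set smaller than $m$). The case analysis would split on how $R^{(1)}\cup S^1$ meets the shifted cycle $D_1^1\cup\{u_1^1\}$, following the four-case template of Theorem~\ref{oneven}. The crucial divergence from the even case is the ``untouched'' case analogous to Case~1: if the permuted part avoids $D_1$, then $p(R)=[p(R^{(1)}),p(R^{(2)})]$ is again a symmetric $\gamma$-set of $G$. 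In the even proof one derived a contradiction from evenness of $p(R)$; here I would instead exploit the \emph{parity/size asymmetry}. Because no even symmetric $\gamma$-set exists, any symmetric $\gamma$-set $p(R)$ has unequal parts, and I would apply Property~\ref{intersection}(a) (with $A_1=p(R^{(1)})$ reindexed so the smaller part plays the role of $A_1$, and $B_1=D_1$) to force $p(R^{(2)})\cap D_1\neq\emptyset$, hence $S^1\cap D_1^1\neq\emptyset$, contradicting the assumption that $D_1$ was avoided. The shifting/private-neighbor cases (analogues of Cases~2--4) should go through by the same adjacency and $2$-packing arguments, since those used only that $u_1$ is a private neighbor of $x_1$ and that $D_1^1$ is a $2$-packing of size $\ell<k$ in the relevant counting step.

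The main obstacle I anticipate is the counting argument in the ``untouched'' case, which in Theorem~\ref{oneven} used $|R^{(1)}\setminus\{y^1\}|=k-1<|D_1^1|$ — an inequality that relied on evenness ($|D_1|=|D_2|=k$). With $|D_1|=\ell<|D_2|=m-\ell$ I would need to re-derive the relevant cardinality bound so that $|R^{(1)}|$ (or the appropriate subset of it after removing matched vertices) is strictly smaller than $|D_1^1|$, which is exactly where the guarantee ``$\ell<m-\ell$'' and the pigeonhole/$2$-packing contradiction must be balanced. The delicate point is that when $|p(R^{(1)})|$ could equal either of the two part sizes, I must ensure I always route through the \emph{strict} inequality $|A_1|<|B_1|$ of Property~\ref{intersection}(a); choosing $D$ to have the globally smallest $|D_1|$ guarantees $|D_1|\le|p(R^{(i)})|$ for the smaller part of $p(R)$ and thereby supplies the needed strict comparison in the opposite direction. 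Handling the boundary subcase where $|p(R^{(1)})|$ equals the minimum part size (so Property~\ref{intersection}(a) does not directly apply and one might need Property~\ref{intersection}(b) or a direct private-neighbor argument) is where I expect to spend the most care.
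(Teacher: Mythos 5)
Your template matches the paper's: the same cyclic-shift-plus-private-neighbor permutation $\alpha$, the same split between an ``untouched'' case and the cases where $R^{(1)}\cup S^1$ meets the permuted set, and the same plan of closing the untouched case with Property~\ref{intersection}. But there is a genuine gap in which part of $D$ you permute. You take $D_1$ to be the \emph{smaller} part ($\ell<m-\ell$) and, moreover, choose it to minimize $|D_1|$; the paper instead assumes $|D_1|>|D_2|$ and cycles through the \emph{larger} part. This is not cosmetic. In the untouched case you must show that the symmetric $\gamma$-set $p(R)=[p(R^{(1)}),p(R^{(2)})]$ meets the permuted part $D_1$. Look at what Property~\ref{intersection} can actually deliver: part (a) concludes $A_2\cap B_1\neq\emptyset$ (a \emph{larger} part meets a \emph{smaller} part), and part (b) concludes $A_2\cap B_2\neq\emptyset$ (the two \emph{larger} parts meet). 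So the only way to force an intersection with a set that is the smaller part of its own $\gamma$-set is via (a) with $B_1=D_1$, which requires the smaller part of $p(R)$ to be \emph{strictly smaller} than $|D_1|$ --- exactly what your minimality assumption on $|D_1|$ rules out. Your fallback of applying (a) ``in the opposite direction'' with $A=D$ only yields an intersection of $p(R)$ with $D_2$, the larger part, which $\alpha$ fixes pointwise; that is perfectly consistent with $R^{(1)}\cup S^1$ avoiding $D_1^1$ and produces no contradiction. In your setup the untouched case therefore does not close.

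The orientation has to be reversed: permute the larger part and make the \emph{smaller} part extremal. With $|D_1|>|D_2|$ and $|D_2|$ minimum over all symmetric $\gamma$-sets, the non-even set $p(R)$ has its smaller part either strictly larger than $|D_2|$, in which case Property~\ref{intersection}(a) applied with $A=D$ gives $D_1\cap p(R)\neq\emptyset$, or equal to $|D_2|$, in which case Property~\ref{intersection}(b) gives $D_1\cap p(R)\neq\emptyset$; either way the permuted part is hit and the untouched assumption is contradicted. (Your deferral of the Case~2--4 analogues is acceptable: those arguments use only the private-neighbor property of $u_1$, the $2$-packing property of the permuted set, and $|R^{(1)}|+|S^1|=m$, none of which depends on evenness, and the paper handles them the same way.)
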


\begin{proof}
Let $D=[D_1, D_2]$ be any symmetric $\gamma$-set of $G$. Since $D$ is not even, then we may assume $|D_1|>|D_2|$ where $D_1$ and $D_2$ are $2$-packings. Let $|D_1|= k$ where $k < m$, and label the vertices of this set as $x_1, x_2, \cdots, x_k$.  Since $D_1$ is a $2$-packing, there exists a vertex $u_1 \in N(x_1)$ such that $u_1 \notin \bigcup_{i=2}^k N(x_i)$. \\

Define the permutation $\alpha$ of $V(G)$ such that for $i = 1, \cdots, k-1$, $\alpha(x_i) = x_{i+1}$, $\alpha(x_k) = u_1$ and $\alpha(u_1)=x_1$. Let $\alpha(v)=v$ for all other $v \in V(G) \backslash (D_1 \cup \{u_1\})$.\\

As in the proof of Theorem \ref{oneven}, assume that $\alpha G$ has domination number $m$ with dominating set $R = R^{(1)} \cup R^{(2)}$. Let $S^1$ and $S^2$ be defined as in Theorem \ref{oneven} with all the associated properties.\\

Suppose first that neither $R^{(1)}$ nor $S^1$ contain a vertex of $D_1^1$. Note that if $m$ is even and $|R^{(1)}| = \frac{m}{2}$, then $p(R) = [p(R^{(1)}), p(R^{(2)})]$ is an even symmetric $\gamma$-set of $G$, which contradicts our assumption. On the other hand, if $|R^{(1)}| \ne |R^{(2)}|$, then Property \ref{intersection} guarantees that either $R^{(1)}\cap D_1^1 \ne \emptyset$ or $S^1\cap D_1^1 \ne \emptyset$. In either case, we reach a contradiction.

Thus, we need only to consider when $(R^{(1)}\cup S^1)\cap D_1^1 \ne \emptyset$. Similar arguments used in Theorem \ref{oneven}, Cases 2 - 4 complete the proof.
\end{proof}
\vskip5mm

Theorem \ref{odd} implies that if a nontrivial connected universal fixer $G$ with $\gamma(G) \ge 4$ exists, then $G$ contains an even symmetric $\gamma$-set. Moreover, we know from Theorem \ref{oneven} that for each even symmetric $\gamma$-set $D$ of $G$, there exists another even symmetric $\gamma$-set $E$ of $G$ such that $D \cap E = \emptyset$. We now consider graphs that contain at least two pairwise disjoint even symmetric $\gamma$-sets. Note that in this case $\gamma(G)$ is an even integer.

\begin{theorem}\label{multi-even}
Let $G$ be a nontrivial connected prism fixer with $\gamma(G) = 2k$ where $k \ge 2$. If $G$ contains at least two disjoint even symmetric $\gamma$-sets, then $G$ is not a universal fixer.
\end{theorem}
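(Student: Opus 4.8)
\textbf{Proof proposal for Theorem \ref{multi-even}.}

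The plan is to exploit the existence of two disjoint even symmetric $\gamma$-sets to build a permutation $\alpha$ that destroys the symmetry, following the same template as Theorems \ref{oneven} and \ref{odd}. Let $D=[D_1,D_2]$ and $E=[E_1,E_2]$ be two disjoint even symmetric $\gamma$-sets, so $|D_1|=|D_2|=|E_1|=|E_2|=k$ and all four sets are maximal $2$-packings by Lemma \ref{symmetric}(c). As before, I would label $D_1=\{x_1,\dots,x_k\}$, select a private neighbor $u_1\in N(x_1)$ with $u_1\notin\bigcup_{i=2}^k N(x_i)$ (available since $D_1$ is a $2$-packing), and define the cyclic-shift permutation $\alpha$ by $\alpha(x_i)=x_{i+1}$ for $i<k$, $\alpha(x_k)=u_1$, $\alpha(u_1)=x_1$, and $\alpha(v)=v$ elsewhere. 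The goal is to show $\gamma(\alpha G)>2k$ by assuming a dominating set $R=R^{(1)}\cup R^{(2)}$ of size $2k$ exists and deriving a contradiction, where $S^1$ (resp. $S^2$) denotes the vertices of $G^1$ (resp. $G^2$) not dominated within their own copy; these $S^i$ are again $2$-packings.

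The case analysis would branch exactly as in Theorem \ref{oneven}. Cases 2, 3, and 4 there only used that $u_1$ is a private neighbor of $x_1$ relative to $D_1$ and that $S^1,S^2$ are $2$-packings with no edges to $R^{(1)},R^{(2)}$; none of those arguments referenced the intersection hypothesis of Theorem \ref{oneven}, so they carry over verbatim and eliminate the possibility that any $x_j^1$ or $u_1^1$ lies in $R^{(1)}\cup S^1$. Thus the only surviving case is the analogue of Case 1: $S^1\cap(D_1^1\cup\{u_1^1\})=\emptyset$ and $R^{(1)}\cap(D_1^1\cup\{u_1^1\})=\emptyset$. In this situation $\alpha$ fixes every vertex of $R^{(1)}\cup S^1$, so $p(R)=[p(R^{(1)}),p(R^{(2)})]$ is a genuine symmetric $\gamma$-set of $G$, and by the symmetry of $\alpha G$ we may assume $|p(R^{(1)})|\le k$. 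The subcase $|p(R^{(1)})|<k$ is dispatched exactly as in Theorem \ref{oneven} using Property \ref{intersection}(a) with $p(R^{(1)})=A_1$ and $D_1=B_1$, forcing $S^1\cap D_1^1\ne\emptyset$, a contradiction.

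The remaining obstacle, and the place where the disjointness hypothesis finally enters, is the subcase $|p(R^{(1)})|=k$, i.e. $p(R)$ is itself an \emph{even} symmetric $\gamma$-set. Here I cannot invoke the nontrivial-intersection property of Theorem \ref{oneven}; instead I would use the \emph{second} even symmetric set $E$. The point is that $D_1$ avoids $E$ entirely, so I should apply Property \ref{intersection} with $E$ in the role of the comparison set. Specifically, since $E_1$ is a $2$-packing disjoint from $D$, and $D_1^1$ must be dominated by $R^{(1)}$ or lie in $S^1$ (but $S^1\cap D_1^1=\emptyset$), every vertex of $D_1^1$ is dominated by $R^{(1)}\setminus\{y^1\}$ for the single forced crossing vertex $y\in D_2\cap p(R)$ exactly as in Theorem \ref{oneven}; this gives $k$ vertices dominated by $k-1$ vertices of a set that must respect the $2$-packing $D_1$, the same counting contradiction. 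The one genuinely new wrinkle is ensuring the forced intersection $D\cap p(R)$ lands in a way that triggers this count: here disjointness of $D$ and $E$ guarantees that if $p(R)$ coincided with a translate avoiding $D$ we could instead compare against $E$, so in all configurations one of the two disjoint even sets yields the pigeonhole violation of its $2$-packing structure. I expect verifying that \emph{at least one} of $D,E$ produces the counting contradiction in every configuration of the forced crossing vertices to be the main technical point, but it reduces to the same $|R^{(1)}\setminus\{y^1\}|=k-1<k=|D_1^1|$ (or $|E_1^1|$) inequality already used, so no new machinery is required.
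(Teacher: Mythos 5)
Your permutation is the wrong one for this theorem, and the gap is not a technicality: the single cyclic shift on $D_1\cup\{u_1\}$ simply fails to raise the domination number when $G$ has an even symmetric $\gamma$-set avoiding $D_1\cup\{u_1\}$. Concretely, suppose $F=[F_1,F_2]$ is any even symmetric $\gamma$-set with $F\cap(D_1\cup\{u_1\})=\emptyset$ (for instance $F=E$ whenever $u_1\notin E$, and with three or more pairwise disjoint even symmetric $\gamma$-sets such an $F$ exists no matter how $u_1$ is chosen, since $u_1$ can lie in at most one of them). Then $F_1^1\cup F_2^2$ dominates $\alpha G$ with $2k$ vertices: $F_1^1$ dominates $V(G^1)\setminus F_2^1$, each $v^1$ with $v\in F_2$ is matched to $v^2\in F_2^2$ because $\alpha$ fixes $v$, and symmetrically in $G^2$. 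So $\gamma(\alpha G)=2k$ and no contradiction is available.

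Your final paragraph is where this surfaces. In the subcase $|p(R^{(1)})|=k$ you need a vertex $y\in p(R)$ that dominates no vertex of $D_1$ in order to run the $k-1<k$ pigeonhole count; in Theorem \ref{oneven} such a $y$ is forced into $D_2$ by the nontrivial-intersection hypothesis. Here $p(R)$ may be entirely disjoint from $D$ (e.g. $p(R)=E$), so there is no forced crossing vertex, and "comparing against $E$ instead" does not help: by Hartnell and Rall's characterization every vertex of $V(G)\setminus D$ is adjacent to exactly one vertex of $D_1$, so when $p(R^{(1)})=E_1$ the set $E_1$ dominates $D_1$ with each vertex covering exactly one element --- no $2$-packing is violated anywhere. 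This is precisely why the paper abandons the single-cycle shift here and instead takes a \emph{maximal} family $D_1,\dots,D_m$ of pairwise disjoint even symmetric $\gamma$-sets, writes $D_i=[X_i,Y_i]$, and builds a permutation that cycles $X_1\to X_2\to\cdots\to X_m\to X_1$ along adjacencies; maximality then guarantees every candidate even symmetric $\gamma$-set meets the union $\bigcup_i X_i\cup Y_i$, and the cycle structure of $\alpha$ restricted to $X_1\cup X_m$ supplies the contradiction. Your Cases 2--4 do carry over, but the heart of the theorem is the case they don't cover, and that case needs a genuinely different permutation.
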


\begin{proof}
Let $D_1, \cdots, D_m$ be a maximal set of pairwise disjoint even symmetric $\gamma$-sets. Since $D_i$ is symmetric, for each $1 \le i \le m$ we can write $D_i =[ X_i , Y_i]$ such that $X_i$ dominates $V(G)\backslash Y_i$ and $Y_i$ dominates $V(G)\backslash X_i$. We let $X = \bigcup_i X_i$.

We know that each $X_i$ is a $2$-packing of size $k$. Thus, we can index the vertices of $X_i$ as $x_{i,1}, x_{i,2}, \cdots, x_{i,k}$ such that $x_{i+1,j}$ is adjacent to $x_{i,j}$ for $1 \le i \le m-1$ and $1 \le j \le k$.

In order to define our permutation of $V(G)$, we first assign an additional index to $X_m$, since we will map $X_m$ to $X_1$. Note that we have already indexed $X_m$ such that $x_{m,j} \in N(x_{m-1,j})$ for $j = 1, \cdots, k$, and this index will be used to map $X_{m-1}$ to $X_m$. Now for $1 \le j \le k$, define $a_j$ such that $x_{m,a_j} \in N(x_{1,j})$, and this index will be used to map $X_m$ to $X_1$. We may define the following permutation of $V(G)$:
\[\alpha(v) = \begin{cases} x_{i+1,j} & \text{if $v=x_{i,j}\ \ \text{for}\  \ 1 \le j \le k\ \ \text{and} \  \ 1 \le i \le m-1$}\\ x_{1,j+1} & \text{if $v=x_{m,a_j}\  \ \text{for}\  \ 1 \le j \le k-1$}\\ x_{1,1} & \text{if $v=x_{m,a_k}$}\\ v & \text{otherwise}.\end{cases}\]

\begin{figure}
\centering
\begin{tikzpicture}
	\draw[rounded corners] (0,0) rectangle (2.5,9.5);
	\vertex (x32) at (1.5,9) [label=left:$x_{3,2}^1$]{};
	\vertex[fill] (x11) at (1.5,8.25) [label=left:$x_{1,1}^1$]{};
	\vertex[rectangle] (x21) at (1.5,7.5) [label=left:$x_{2,1}^1$]{};
	\vertex (x31) at (1.5,6.75) [label=left:$x_{3,1}^1$]{};
	\vertex[fill] (x14) at (1.5,6) [label=left:$x_{1,4}^1$]{};
	\vertex[rectangle] (x24) at (1.5,5.25) [label=left:$x_{2,4}^1$]{};
	\vertex (x34) at (1.5,4.5) [label=left:$x_{3,4}^1$]{};
	\vertex[fill] (x12) at (1.5,3.75) [label=left:$x_{1,2}^1$]{};
	\vertex[rectangle] (x22) at (1.5,3) [label=left:$x_{2,2}^1$]{};
	\vertex (x33) at (1.5,2) [label=left:$x_{3,3}^1$]{};
	\vertex[fill] (x13) at (1.5,1.25) [label=left:$x_{1,3}^1$]{};
	\vertex[rectangle] (x23) at (1.5,.5) [label=left:$x_{2,3}^1$]{};
	\node (G1) at (1.25,10) {$G^1$};
	\draw (G1);
	\draw[rounded corners] (4.5,0) rectangle (7,9.5);
	\node (G2) at (5.75, 10) {$G^2$};
	\draw (G2);
	\vertex (y32) at (5.5,9) [label=right:$x_{3,2}^2$]{};
	\vertex[fill] (y11) at (5.5,8.25) [label=right:$x_{1,1}^2$]{};
	\vertex[rectangle] (y21) at (5.5,7.5) [label=right:$x_{2,1}^2$]{};
	\vertex (y31) at (5.5,6.75) [label=right:$x_{3,1}^2$]{};
	\vertex[fill] (y14) at (5.5,6) [label=right:$x_{1,4}^2$]{};
	\vertex[rectangle] (y24) at (5.5,5.25) [label=right:$x_{2,4}^2$]{};
	\vertex (y34) at (5.5,4.5) [label=right:$x_{3,4}^2$]{};
	\vertex[fill] (y12) at (5.5,3.75) [label=right:$x_{1,2}^2$]{};
	\vertex[rectangle] (y22) at (5.5,3) [label=right:$x_{2,2}^2$]{};
	\vertex (y33) at (5.5,2) [label=right:$x_{3,3}^2$]{};
	\vertex[fill] (y13) at (5.5,1.25) [label=right:$x_{1,3}^2$]{};
	\vertex[rectangle] (y23) at (5.5,.5) [label=right:$x_{2,3}^2$]{};
	\node (D1) at (3.5,-1) {Note that $\alpha(v)=v$ for all other vertices of $G$ not depicted};
	\draw (D1);
	\node (D2) at (-2.75, 4.5) {- $X_1$};
	\draw (D2);
	\vertex[fill] (closed) at (-3.55,4.5) []{};
	\vertex[rectangle] (rec) at (-3.55, 4) []{};
	\vertex (open) at (-3.55,3.5) []{};
	\node (D3) at (-2.75, 4) {- $X_2$};
	\draw (D3);
	\node (D4) at (-2.75, 3.5) {- $X_3$};
	\draw (D4);
	\node (o) at (10, 0) {};
	\draw (o);
	\path
		(x32) edge (x11)
		(x11) edge (x21)
		(x21) edge (x31)
		(x31) edge (x14)
		(x14) edge (x24)
		(x24) edge (x34)
		(x34) edge (x12)
		(x12) edge (x22)
		(x22) edge[bend right=20] (x32)
		(x33) edge (x13)
		(x13) edge (x23)
		(x33) edge[bend left=25] (x23)
		(y32) edge (y11)
		(y11) edge (y21)
		(y21) edge (y31)
		(y31) edge (y14)
		(y14) edge (y24)
		(y24) edge (y34)
		(y34) edge (y12)
		(y12) edge (y22)
		(y22) edge[bend left=20] (y32)
		(y33) edge (y13)
		(y13) edge (y23)
		(y33) edge[bend right=25] (y23)
		(x32) edge (y12)
		(x34) edge (y13)
		(x33) edge (y14)
		(x31) edge (y11)
		(x22) edge (y32)
		(x21) edge (y31)
		(x23) edge (y33)
		(x24) edge (y34)
		(x11) edge (y21)
		(x12) edge (y22)
		(x13) edge (y23)
		(x14) edge (y24);
	
\end{tikzpicture}
\caption{Specific case when $m=3$ and $k=4$}
\label{fig:perm}
\end{figure}

Notice in Figure \ref{fig:perm} that when we consider the indices of $X_m$ as $x_{m,a_j}\in N(x_{1,j})$, we can write the vertices of $X_1$ and $X_m$ as the cycle (in the permutation sense)
\[(x_{m,a_1}, x_{1,2}, x_{m,a_2}, x_{1,3}, \cdots, x_{m,a_k}, x_{1,1})\]
where the following holds for each $1 \le j \le k$:
\begin{enumerate}
\item[(1)] $x_{m,a_j}$ is adjacent to the vertex immediately preceding it within the cycle; and
\item[(2)] $x_{m,a_j}$ is mapped under $\alpha$ to the vertex immediately following it within the cycle.
\end{enumerate}
Furthermore, this cycle cannot be written as a product of subcycles that exhibit the same properties. 

Suppose that $\alpha G$ has domination number $2k$ with dominating set $R = R^{(1)} \cup R^{(2)}$. Let $S^1$ and $S^2$ be defined as in Theorem \ref{oneven} with all the associated properties.

We first claim that $R^{(1)} \cap X^1 \ne \emptyset$.
To see this, suppose neither $S^1$ nor $R^{(1)}$ contains a vertex of $X^1$. By symmetry of $\alpha G$, we need only to consider two cases. If $|R^{(1)}| = k = |R^{(2)}|$, then $p(R) = [p(R^{(1)}),p(R^{(2)})]$ is an even symmetric $\gamma$-set. Since $D_1, \cdots , D_m$ is a maximal set of pairwise disjoint even symmetric $\gamma$-sets, for some $1 \le i \le m$ $D_i \cap p(R) \ne \emptyset$. Moreover, $D_i \cap p(R) \subseteq Y_i$, and without loss of generality we may assume $y_{i,j}^1 \in R^{(1)}$ and $y_{i,j}^2 \in S^2$ for some $1 \le j \le k$. It follows that each vertex of $X_i^1$ is either dominated by some vertex $v^1 \in R^{(1)}\backslash \{y_{i,j}^1\}$ or contained in $S^1$. By assumption, $S^1 \cap X_i^1 = \emptyset$ so each vertex of $X_i^1$ must be dominated by some vertex $v^1 \in R^{(1)} \backslash \{y_{i,j}^1\}$. However, this contradicts the fact that $X_i^1$ is a $2$-packing since $|R^{(1)}\backslash \{y_{i,j}^1\}| = k-1 < |X_i^1|$. Therefore, either $R^{(1)} \cap X^1 \ne \emptyset$ and we are done, or $S^1 \cap X^1 \ne \emptyset$. If $S^1 \cap X^1 \ne \emptyset$, then $R^{(2)} \cap X^2 \ne \emptyset$ by definition of $\alpha$. In this case, simply relabel $G^1$ and $G^2$ so that $R^{(1)} \cap X^1 \ne \emptyset$. 

 On the other hand, if $|R^{(1)}| < k$, then $|S^1 \cap X_i^1| \ne 0$ for each $1 \le i \le m$, since each $X_i$ is a $2$-packing and every vertex of $G^1$ is either in $N[R^{(1)}]$ or in $S^1$. This implies for each $1 \le i \le m$ that $|R^{(2)} \cap X_i^2| \ne 0$ by definition of $\alpha$. As before, simply relabel $G^1$ and $G^2$ so that $|R^{(1)}| \ge k$, and we have $R^{(1)} \cap X^1 \ne \emptyset$.

We next claim that $S^2 \cap X_1^2 \ne \emptyset$.
 From above, we may assume $|R^{(1)}| \ge k$. If $|R^{(1)}| > k$, then $|R^{(2)}| <k$. This implies that $S^2 \cap X_1^2 \ne \emptyset$, since $X_1$ is a $2$-packing and every vertex of $G^2$ is either in $N[R^{(2)}]$ or in $S^2$. So assume that $|R^{(1)}|=k$, and let $x_{i,a}^1 \in R^{(1)}$ for some $1 \le i \le m$ and $1 \le a \le k$. If $i=m$, then by definition of $\alpha$ we have $S^2 \cap X_1^2 \ne \emptyset$. So assume $i \ne m$. Since $Y_i$ is a $2$-packing and no vertex of $Y_i$ is adjacent to a vertex of $X_i$, there exist at least $|R^{(1)} \cap D_i^1|$ vertices in $S^1 \cap Y_i^1$. Moreover, since each vertex of $Y_i$ is mapped to itself under $\alpha$, we know there exist at least $|R^{(1)} \cap D_i^1|$ vertices in $R^{(2)}\cap Y_i^2$. This, together with the fact that $|R^{(1)}|=k=|R^{(2)}|$, gives 
\begin{eqnarray*}
|R^{(2)} \backslash (R^{(2)} \cap Y_i^2)| &\le& k - |R^{(1)} \cap D_i^1|\\
&\le& k-1.
\end{eqnarray*}
Therefore, since $X_i$ is a $2$-packing and each vertex of $G^2$ is either in $N[R^{(2)}]$ or in $S^2$, $S^2 \cap X_i^2 \ne \emptyset$.  So assume $x_{i,b}^2 \in S^2$ for some $1 \le b \le k$. If $i=1$ or if $m=2$, then we are done with the proof of this claim. So assume $m >2$ and $i \not\in \{ 1, m\}$. By definition of $\alpha$, $x_{i-1,b}^1 \in R^{(1)}$. Applying the above argument inductively, eventually we have $S^2 \cap X_1^2 \ne \emptyset$. Let  $r=|S^2 \cap X_1^2|>0$.

We next claim that $r<k$. To see this, suppose that $r = k$. 
Because $X_1$ dominates $V(G)\backslash Y_1$, we have $R^{(2)} = Y_1^2$. However, under $\alpha$ this implies $R^{(1)} = X_m^1$ and $S^1 = Y_1^1$. This contradicts the fact that $Y_1 \subset N(X_m)$, since $R^{(1)}$ was defined to be a set in $G^1$ that dominates all but $S^1$. Thus, we may conclude that $r <k$.

Let $x_{1,b_1}^2, x_{1,b_2}^2, \cdots, x_{1,b_r}^2$ be the vertices of $S^2 \cap X_1^2$. There exist exactly $r$ vertices in $R^{(1)} \cap X_m^1$; call them $x_{m,c_1}^1, x_{m,c_2}^1, \cdots , x_{m,c_r}^1$. We claim for some $x_{1,b_j}^2 \in S^2\cap X_1^2$ that $x_{1,b_j }^1\not\in N(R^{(1)} \cap X_m^1)$. So assume not; that is, assume $\{x_{1,b_1}^1, x_{1,b_2}^1, \cdots, x_{1,b_r}^1 \} \subset N(R^{(1)} \cap X_m^1)$. This implies there exists some relabeling of the $b_j$'s and $c_j$'s such that $x_{m,c_j} \in N(x_{1,b_j})$ and $\alpha(x_{m,c_j}) = x_{1,b_j+1}$ for $b_j \in \{1, \cdots, k-1\}$ and $\alpha(x_{m,c_j}) = x_{11}$ if $c_j = a_k$ where $a_k$ is the index first given to $x_m$ to define $\alpha$. Consequently, there exists some subcycle of $(x_{m,a_1}, x_{12}, x_{m,a_2}, x_{13}, \cdots, x_{m,a_k}, x_{11})$ consisting of the vertices $x_{1,b_1}, x_{1,b_2}, \cdots, x_{1,b_r}, x_{m,c_1}, x_{m,c_2}, \cdots , x_{m,c_r}$ such that for each $1 \le j \le r$:
\begin{enumerate}
\item[(1)] $x_{m,c_j}$ is adjacent to the vertex immediately preceding it within its subcycle; and
\item[(2)] $x_{m,c_j}$ is mapped under $\alpha$ to the vertex immediately following it within its subcycle.
\end{enumerate}

However, this contradicts the construction of $\alpha$ unless $r=k$, which we know to be false. 
Thus, for some $x_{1,b_j}^2 \in S^2 \cap X_1^2$, $x_{1,b_j}^1 \in S^1$ or $x_{1,b_j}^1 \in N[R^{(1)}\backslash(R^{(1)} \cap X_m^1)]$.
\begin{figure}[h!]
\centering
\begin{tikzpicture}
	\node (R) at (3,2) {$R^{(1)}$};
	\draw[rounded corners] (0,0) rectangle (6,1.5);
	\vertex[fill, minimum size=.1cm] (v1) at (1,.75) [label=below:$x_{m,c_1}^1$]{};
	\vertex[fill, minimum size=.1cm] (v2) at (2,.75) [label=below:$x_{m,c_2}^1$]{};
	\vertex[fill, minimum size=.1cm] (v3) at (3,.75) [label=below:$x_{m,c_3}^1$]{};
	\vertex[fill, minimum size=.1cm] (v4) at (4.5,.75) [label=right:$x_{i,d}^1$]{};
	\vertex[fill, minimum size=.1cm] (v5) at (4.5,-.75) [label=below:$x_{1,b_2}^1$]{};
	\node (S) at (10,2) {$S^2$};
	\draw[rounded corners] (7,0) rectangle (13,1.5);
	\vertex[fill, minimum size=.1cm] (w1) at (8,.75) [label=below:$x_{1,b_1}^2$]{};
	\vertex[fill, minimum size=.1cm] (w2) at (9,.75) [label=below:$x_{1,b_2}^2$]{};
	\vertex[fill, minimum size=.1cm] (w3) at (10,.75) [label=below:$x_{1,b_3}^2$]{};
	\vertex[fill, minimum size=.1cm] (w4) at (11.5,.75) [label=right:$x_{i+1,d}^2$]{};
	\vertex[fill, minimum size=.1cm] (w5) at (11.5,-.75) [label=below:$x_{i,d}^2$]{};
	\draw (R);
	\draw (S);
	\path
		(v4) edge (v5)
		(w4) edge (w5)
		(w2) edge (w5);
\end{tikzpicture}
\caption{Specific case when $|S^2 \cap X_1^2|=3$}
\label{fig:4}
\end{figure}
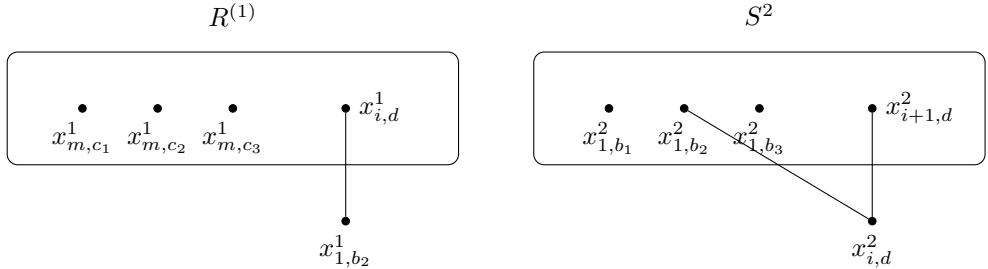
If $x_{1,b_j}^1 \in S^1$, then by definition of $\alpha$, $x_{2,b_j}^2 \in R^{(2)}$. Since $x_{1,b_j}^2 \in N(x_{2,b_j}^2)$, this implies there exists an edge between $R^{(2)}$ and $S^2$. This contradiction shows $x_{1,b_j}^1 \in N[R^{(1)}\backslash (R^{(1)}\cap X_m^1)]$. So assume $v^1 \in R^{(1)}$ where $x_{1,b_j}^1 \in N[v^1]$. If $\alpha(v)=v$, then $v^2$ and $x_{1,b_j}^2$ are both in $S^2$, which contradicts $S^2$ being a $2$-packing. On the other hand, if $\alpha(v) \ne v$, then $v = x_{i,d}$ for some $i \ne m$ and $1 \le d \le k$. 
\begin{enumerate}
\item[Case 1] Assume that $i=1$. Since $X_i$ is a $2$-packing, it follows that $v^1 = x_{1,b_j}^1 \in R^{(1)}$. Thus, $x_{2,b_j}^2 \in S^2$ by definition of $\alpha$. But $x_{1,b_j}^1$ was assumed to be in $S^2$, so this violates $S^2$ being a $2$-packing. Therefore, this case cannot occur.
\item[Case 2] Assume that $i \not\in\{ 1,m\}$. Immediately this implies that $m >2$. Furthermore, $\alpha(x_{i,d})=x_{i+1,d}$, and we have $x_{i,d}^2 \in N(x_{1,b_j}^2) \cap N(x_{i+1,d}^2)$, which contradicts $S^2$ being a $2$-packing, as shown in Figure \ref{fig:4}. Thus, this case cannot occur either.
\end{enumerate}
Having considered all cases, we have shown such a dominating set $R=R^{(1)} \cup R^{(2)}$ of $\alpha G$ does not exist of order $2k$. Hence, the result follows.
\end{proof}

In conclusion, we have shown any nontrivial connected prism fixer $G$ with $\gamma(G) \ge 4$ is not a universal fixer. Moreover, if a graph is not a prism fixer, then it cannot be a universal fixer. Therefore, Theorem \ref{gu} follows. Finally, by Observation \ref{disconnect} and previous results found in \cite{RefWorks:15}, we know that all graphs containing at least one edge are not universal fixers. That is, the only universal fixers that exist are the edgeless graphs.
\section*{Acknowledgments}
I would like to thank Wayne Goddard for several useful suggestions in the write-up of Theorem~\ref{multi-even}. I would also like to thank Doug Rall for the many discussions we had over the implications of Property~\ref{intersection}.

{\small
}


\begin{thebibliography}{999}
%%%%%
%%%References should be listed in alphabetical order and include coordinates from the database Zentralblatt MATH and/or MathSciNet.
%%%%%

\bibitem{RefWorks:8}
   { B.~Hartnell, D.~Rall}.
   On dominating the cartesian product of a graph and $K_2$.
   Discuss. Math., Graph Theory. {\bf 24} (2004), 389-402. Zbl1063.05107, MR2120063
   
 \bibitem{RefWorks:14}
   { W.~Gu}.
   Communication with \uppercase{S}.\uppercase{T}. \uppercase{H}edetniemi.
   Southeastern Conference on Combinatorics, Graph Theory, and Computing. (1999)

\bibitem{RefWorks:7}
   { R.G.~Gibson}.
   Bipartite graphs are not universal fixers.
   Discrete Math. {\bf 308} (2008), No. 24, 5937-5943. Zbl1181.05068, MR2464884

\bibitem{RefWorks:11}
   { E.L.~Cockayne, R.G.~Gibson, C.M.~Mynhardt}.
   Claw-free graphs are not universal fixers.
   Discrete Math. {\bf 309} (2009), No. 1, 128-133. Zbl1219.05116, MR2475005
         
\bibitem{RefWorks:15}
   { C.M.~Mynhardt, Z.~Xu}.
  Domination in prisms of graphs: universal fixers.
   Util. Math. {\bf 78} (2009), 185-201. Zbl05556299, MR2499846

\bibitem{RefWorks:9}
   { A.P.~Burger, C.M.~Mynhardt}.
   Regular graphs are not universal fixers.
   Discrete Math. {\bf 310} (2010), No. 2, 364-368. Zbl1216.05098, MR2563053
      
\bibitem{RefWorks:13}
  { W.~Gu, K.~Wash}.
  Bounds on the domination number of permutation graphs.
  J. of Interconnection Networks. {\bf 10} (2009), No. 3, 205-217.

\end{thebibliography}
\end{document}